\newtheorem{thm}{Theorem}[section]
\newtheorem{corollary}[thm]{Corollary}
\newtheorem{lemma}[thm]{Lemma}
\theoremstyle{definition}
\newtheorem{definition}[thm]{Definition}
\theoremstyle{remark}
\newtheorem{remark}[thm]{Remark}
\numberwithin{equation}{section}
\newcommand*\diff{\mathop{}\mathrm{d}}
\newcommand\restr[2]{{
  \left.\kern-\nulldelimiterspace 
  #1 
  \vphantom{\big|} 
  \right|_{#2} 
  }}
\begin{document}

\title[Stochastic extended KdV equation]
{Existence of mild solution to stochastic extended Korteweg - de Vries equation}

\author[Karczewska]{Anna Karczewska}
\address{Faculty of Mathematics, Computer Science and Econometrics\\ University of Zielona G\'ora, Szafrana 4a, 65-516 Zielona G\'ora, Poland}
 \email{a.karczewska@wmie.uz.zgora.pl} \thanks{}

\author[Szczeci\'nski]{Maciej Szczeci\'nski}
\address{Faculty of Mathematics, Computer Science and Econometrics\\ University of Zielona G\'ora, Szafrana 4a, 65-516 Zielona G\'ora, Poland}
 \email{m.szczecinski@wmie.uz.zgora.pl} \thanks{}

\date{\today}

\subjclass[2010]{93B05, 93C25, 45D05, 47H08, 47H10}

\keywords{Extended KdV equation, mild solution}


\begin{abstract}
In the paper we consider stochastic Korteweg - de Vries - type equation. We give sufficient conditions for the existence and uniqueness of local mild solution to the equation with additive noise. 
We discuss possibility of globalization of mild solution, as well.
\end{abstract}

\maketitle

\section{Introduction} \label{intro}

Nonlinear wave equations 
 attracted an enormous attention in many fields, e.g.\ physics (hydrodynamics, plasma physics, optics), technology (electric circuits, light impulses propagation) and  biology (neuroscience models, protein and DNA motion).
Usually such equations are obtained as a kind of approximation and/or simplification of the set of several more fundamental equations governing the system with their boundary and initial conditions. Approximations are usually based on perturbative approach in which some small parameters, related to particular properties of the considered system, appear.
Then the relevant quantities are expanded in power series of these small parameters. Limitation to terms of the first or second order allows to derive approximate nonlinear wave equations describing the evolution of a given system.

In several fields the lowest (first) order equation takes form of the Korteveg - de Vries equation (commonly denoted as KdV) Korteweg and de Vries \cite{KdV}
\begin{equation}\label{kdv0}
\frac{\partial u}{\partial t} + 6 u \frac{\partial u}{\partial x} + \frac{\partial^3 u}{\partial x^3}=0.
\end{equation}
It was derived firstly for surface gravity waves on shallow water but later found in many other systems, see, e.g.\ Drazin and Johnson \cite{DrazJohn}, Infeld \cite{EIGR}, Jeffrey \cite{Jeffrey}, Remoissenet \cite{Rem}.

Although KdV equation displays dominant features of weekly dispersive nonlinear waves, it is a valid approximation only for constant water depth. For the case of uneven bottom or when surface tension becomes important perturbative approach to Euler equations should be applied up to second order in small parameters. Then linear terms with fifth order derivatives 
 and new nonlinear terms appear in final nonlinear wave equations. Examples of such equations, called {\tt extended KdV} or {\tt KdV2} can be found in many papers, see, e.g.\ Karczewska et.al. \cite{KRR}, Karczewska et.al. \cite{KRI} and references therein. 
Nonlinear dispersive waves attracted a great attention of mathematicians. Among many examples of mathematical description of those problems we point out books of Linares and Ponce \cite{LiPo} and Tao \cite{Tao}. 

Surface water waves are subjected to some unpredictable influences of the environment, like winds, bottom fluctations, etc. These unknown factors can be accounted for by introducing a forcing term of stochasic nature into wave equation.

In the current paper we study stochastic version of KdV2 -- type equation derived in Karczewska et.al. \cite{KRR}, Karczewska et.al. \cite{KRI}. 
We supply sufficient conditions for the existence and uniqueness of local mild solution to the Korteweg - de Vries type equation of the form (\ref{NL1}) below.  We follow and generalize the approach of de Bouard and Debussche \cite{Deb} and Kenig, Ponce and Vega \cite{KPV91,KPV93} to such equation.

We obtained the existence and uniqueness results on random interval. The generalization of these results to any time interval with the approach due to de Bouard and Debussche \cite{Deb}  is not possible since they use some properties of classical KdV equation and its invariants. In our case, for extended KdV equation, there exists only one (the lowest) exact invariant, the other ones are only adiabatic (approximate) \cite{KRIad}.

In Section \ref{Snit} we discuss  possibility for some kind of globalization of obtained mild solution to stochastic extended KdV equation studied. We use the near-identity transformation (NIT for short) Kodama \cite{Kodama}, Dullin et.al. \cite{Dull2001} in order to transform original non-integrable extended KdV equation into asymptotically equivalent equation which has Hamiltonian form and therefore is integrable. The term {\tt asymptotic equivalence} means that solutions of both equations coincide when physically relevant coefficients of the equations tend to zero (for details, see Section \ref{Snit}).

\section{Existence and uniqueness} \label{exist}  

In this section we prove the existence and uniqueness of mild solution 
on a random interval to stochastic extended KdV-type equation of the form
\begin{equation}\label{NL1}
 \begin{aligned}
		 \diff u + \left( \frac{\partial ^{3} u }{\partial x^{3}} + u \frac{\partial u}{\partial x} + u \frac{\partial^{3} u}{\partial x^{3}} + \frac{\partial u}{\partial x}\frac{\partial^{2} u}{\partial x^{2}} \right) \diff t = \Phi \diff W , \quad x\in\mathbb{R}, \quad t\ge 0.
	\end{aligned}
\end{equation}
Motivation for studying the equation (\ref{NL1}) is given in Section \ref{Snit}. 
In (\ref{NL1}), $W$ is a cylindrical Wiener process defined on the stochastic basis $(\Omega, \mathcal{F}, (\mathcal{F}_t)_{t \geq 0}, \mathbb{P})$ with values on $L^2(\mathbb{R})$ adapted to the filtration 
$(\mathcal{F}_{t})_{t\ge 0}$. 
The operator  ~$\Phi$ belongs to $L_2^0$, where
$L_2^0 :=L_2^0(L^2(\mathbb{R});H^{\sigma}(\mathbb{R}))$~ is the space of Hilbert-Schmidt operators acting from $L^2(\mathbb{R})$ into $H^{\sigma}(\mathbb{R})$~ and  
$H^{\sigma}(\mathbb{R})$~ is the Sobolev space (see, e.g., Adams \cite{Adams}), ~$\sigma >0$.
The equation (\ref{NL1}) is supplemented with an initial condition 
\begin{equation} \label{NL1WP}
u(x,0) = u_0(x), \quad x\in\mathbb{R}, \quad t\ge 0.
\end{equation}

\begin{definition} \label{3.1}
A stochastic process ~$u(t),~ t\ge 0$, defined on the basis $(\Omega, \mathcal{F}, (\mathcal{F}_t)_{t \geq 0}, \mathbb{P})$ is said to be a {\tt mild solution} to (\ref{NL1})-(\ref{NL1WP}), if 	
\begin{equation}\label{mNL1} 
	u(t) = V(t)u_{0} + \int_{0}^{t} V(t-s) \left(  u \frac{\partial u}{\partial x} + u \frac{\partial^{3} u}{\partial x^{3}} + \frac{\partial u}{\partial x}\frac{\partial^{2} u}{\partial x^{2}} \right) \diff s + \int_{0}^{t} V(t-s)\, \Phi \diff W(s).
\end{equation}
\end{definition}
In (\ref{mNL1}), ~$V(t), t\ge 0$, is a unitary group generated by the linear part of the KdV equation (\ref{kdv0}).

To simplify notation we will use the following abbreviation for stochastic convolution
\begin{equation} \label{W_V}
W_{V}(t):= \int_0^t V(t-s)\,\Phi \,dW(s),\quad t\ge 0.
\end{equation}

\begin{definition}\index{$_{k}A$}
For a given set $A$~ by ~$_{n}A$ we shall denote the biggest subset of $A$ defined as
\begin{equation}\nonumber
_{n}A := \left\{u\in A: \frac{\partial^{kn}u}{\partial x^{kn}} \in A ,k\in \mathbb{N}\right\}. 
\end{equation}
\end{definition}

In the paper we shall use the following notation.
\begin{align*}
 X_{\sigma} (T):= \left\{~ \right.& u\in L^{\infty}(0,T;H^{\sigma}(\mathbb{R}))\cap L^2(\mathbb{R};L^{\infty}([0,T])), \hspace{5ex} \\
 &\hspace{5ex} \left. D^{\sigma} \partial_x u \in  L^{\infty}(\mathbb{R},L^2([0,T])), \partial_x u \in
L^4([0,T];L^{\infty}(\mathbb{R})) \right\}
\end{align*}

\begin{lemma}\label{LEM2}
If $(u_{0})_{2x}\in H^{\sigma}(\mathbb{R})$, then $V(t)(u_{0})_{2x} \in X_{\sigma}(T)$,~ ~$\sigma > \frac{3}{4}$.
\end{lemma}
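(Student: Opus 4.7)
The plan is to verify each of the four defining conditions of $X_{\sigma}(T)$ separately for $V(t)v_{0}$, where $v_{0} := (u_{0})_{2x} \in H^{\sigma}(\mathbb{R})$, by invoking the standard linear estimates for the Airy group $V(t) = e^{-t\partial_{x}^{3}}$ developed by Kenig, Ponce and Vega \cite{KPV91,KPV93}. Nothing about the nonlinear part of (\ref{NL1}) enters at this stage; the statement is purely a property of the linear propagator acting on an $H^{\sigma}$ datum.

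Condition (i), $V(t)v_{0} \in L^{\infty}(0,T; H^{\sigma}(\mathbb{R}))$, is immediate because $V(t)$ is a unitary group on $H^{\sigma}(\mathbb{R})$, so $\|V(t)v_{0}\|_{H^{\sigma}} = \|v_{0}\|_{H^{\sigma}}$ uniformly in $t \in [0,T]$. Condition (ii), $V(t)v_{0} \in L^{2}(\mathbb{R}; L^{\infty}([0,T]))$, follows from the Kenig--Ponce--Vega maximal function estimate, which bounds $\|V(\cdot)v_{0}\|_{L^{2}_{x}L^{\infty}_{T}}$ by a constant (depending on $T$) times $\|v_{0}\|_{H^{s}}$ for any $s > 3/4$; this is the only place where the hypothesis $\sigma > 3/4$ is genuinely needed. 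Condition (iii), $D^{\sigma}\partial_{x}V(t)v_{0} \in L^{\infty}(\mathbb{R}; L^{2}([0,T]))$, is an instance of the Kato local smoothing estimate $\|\partial_{x}V(t)f\|_{L^{\infty}_{x}L^{2}_{t}} \lesssim \|f\|_{L^{2}}$, applied to $f = D^{\sigma}v_{0} \in L^{2}(\mathbb{R})$ and combined with the fact that the Fourier multipliers $D^{\sigma}$ and $\partial_{x}$ commute with $V(t)$. Condition (iv), $\partial_{x}V(t)v_{0} \in L^{4}([0,T]; L^{\infty}(\mathbb{R}))$, follows from the corresponding Strichartz-type estimate of Kenig--Ponce--Vega for the Airy group, which controls $\|\partial_{x}V(\cdot)v_{0}\|_{L^{4}_{T}L^{\infty}_{x}}$ by $\|v_{0}\|_{H^{\sigma}}$ for the admissible range $\sigma > 3/4$.

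Since all four steps reduce to citations of well-known dispersive estimates for the linear KdV equation, the verification is essentially bookkeeping: isolate each norm, apply the appropriate inequality, and observe that every resulting constant depends only on $T$ and $\|v_{0}\|_{H^{\sigma}}$. The main obstacle, to the extent there is one, is pinpointing the sharp threshold $\sigma > 3/4$: conditions (i) and (iii) hold for every $\sigma \geq 0$, and condition (iv) is available under a milder restriction, but the maximal function estimate in (ii) is sharp and forces precisely $\sigma > 3/4$. Thus the lemma is driven by the weakest of the four ingredients, namely the Kenig--Ponce--Vega maximal inequality.
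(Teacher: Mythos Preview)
Your proposal is correct and follows essentially the same route as the paper: the paper's proof consists of a single citation to Proposition~3.5 of de~Bouard and Debussche~\cite{Deb}, which asserts precisely that $V(t)w \in X_{\sigma}(T)$ whenever $w \in H^{\sigma}(\mathbb{R})$ with $\sigma > \tfrac{3}{4}$, and you have simply unpacked that proposition by verifying each of the four norms via the standard Kenig--Ponce--Vega linear estimates. One minor sharpening: condition~(iv) actually holds already for $\sigma \geq \tfrac{3}{4}$ (since the underlying Strichartz estimate is $\|D^{1/4}V(t)f\|_{L^{4}_{t}L^{\infty}_{x}} \lesssim \|f\|_{L^{2}}$), so your remark that condition~(ii) alone forces the strict inequality $\sigma > \tfrac{3}{4}$ is exactly right.
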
 
\vspace{-2ex}
\begin{proof}
Proof comes  from Proposition 3.5 in de Bouard and Debussche \cite{Deb}.
\end{proof}
 \vspace{-1ex}
Now, we can formulate first result.

\begin{thm} \label{aux}
Assume that $\Phi\in L_2^0(L^2(\mathbb{R});H^\sigma(\mathbb{R}))$ 
with $\alpha >\frac{3}{4}$ and 
\begin{equation}\label{cs1}
\frac{\partial^{2}}{\partial x^{2}} \left[\int_{0}^{t} V(t-s)\Phi \diff W(s)\right] \in L^{2}\left(\Omega;L_{x}^{2}\left(L_{t}^{\infty}\right)\right).
\end{equation}
Then $\frac{\partial^{2}}{\partial x^{2}}W_{V} \in \widehat{X}_{\sigma}(T)$, $\mathbb{P}$-almost surely, where
\begin{equation}\nonumber
\begin{aligned}
\widehat{X}_{\sigma}(T) :=& \left\{ u: L^{2} (\mathbb{R};L^{\infty}([0,T])), \quad D^{\sigma} \partial_{x} u \in L^{\infty} (\mathbb{R}, L^{2}([0,T])), \right. \\
 &\left. \partial_{x} u \in L^{4} ([0,T];L^{\infty}(\mathbb{R})) \right\} ,
\end{aligned}
\end{equation}
for any $T>0$ and all $\sigma$, such that $\frac{3}{4}<\sigma<1$.
\end{thm}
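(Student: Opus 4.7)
The space $\widehat{X}_{\sigma}(T)$ is defined by three conditions, so my plan is to verify each of them separately for $u = \partial_{x}^{2} W_{V}$. The first, namely $\partial_{x}^{2} W_{V} \in L^{2}(\mathbb{R}; L^{\infty}([0,T]))$ $\mathbb{P}$-almost surely, is built directly into hypothesis (\ref{cs1}): the assumed $L^{2}(\Omega; L^{2}_{x} L^{\infty}_{t})$ integrability forces almost sure membership in $L^{2}_{x} L^{\infty}_{t}$. Hence the real work lies in establishing the Kato-type estimate $D^{\sigma} \partial_{x}^{3} W_{V} \in L^{\infty}(\mathbb{R}; L^{2}([0,T]))$ and the Strichartz-type estimate $\partial_{x}^{3} W_{V} \in L^{4}([0,T]; L^{\infty}(\mathbb{R}))$. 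My strategy is to combine the sharp deterministic linear estimates of Kenig-Ponce-Vega \cite{KPV91, KPV93} (as recalled in Proposition~3.5 of \cite{Deb}) with the It\^o isometry and the factorization method for stochastic convolutions.

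For the Kato bound, I would fix an orthonormal basis $(e_{k})_{k \ge 1}$ of $L^{2}(\mathbb{R})$ and independent Brownian motions $(W_{k})$ with $W(t) = \sum_{k} W_{k}(t) e_{k}$, so that
\begin{equation*}
W_{V}(t) \;=\; \sum_{k \ge 1} \int_{0}^{t} V(t-s)\,\Phi e_{k} \, \diff W_{k}(s).
\end{equation*}
By the It\^o isometry applied pointwise in $x \in \mathbb{R}$ together with stochastic Fubini,
\begin{equation*}
\mathbb{E}\int_{0}^{T} \bigl|D^{\sigma} \partial_{x}^{3} W_{V}(t,x)\bigr|^{2} \diff t \;=\; \sum_{k \ge 1} \int_{0}^{T} \int_{s}^{T} \bigl|D^{\sigma} \partial_{x} V(t-s)(\partial_{x}^{2} \Phi e_{k})(x)\bigr|^{2} \diff t \, \diff s.
\end{equation*}
The Kenig-Ponce-Vega smoothing identity then gives $\sup_{x} \int_{\mathbb{R}} |D^{\sigma} \partial_{x} V(\tau) g(x)|^{2} \diff \tau \lesssim \|D^{\sigma} g\|^{2}_{L^{2}}$; taking $g = \partial_{x}^{2} \Phi e_{k}$ and summing in $k$ controls the right-hand side by a constant times $T \, \|\Phi\|^{2}_{L_{2}^{0}(L^{2}; H^{\sigma+2})}$. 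The resulting pointwise-in-$x$ second moment estimate is then upgraded to $\mathbb{P}$-almost sure finiteness of the essential supremum by the factorization argument used in \cite{Deb}.

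For the Strichartz bound, I would appeal to the Kenig-Ponce-Vega maximal estimate of the form $\|\partial_{x} V(t) f\|_{L^{4}_{T} L^{\infty}_{x}} \lesssim \|D^{\gamma} f\|_{L^{2}}$ with a suitable $\gamma$. Since the $L^{\infty}_{x}$ norm is not hilbertian, the It\^o isometry does not apply directly; instead I would invoke the Burkholder-Davis-Gundy inequality in the Hilbert-valued framework, combined again with stochastic Fubini, to reduce the estimate of $\mathbb{E}\|\partial_{x}^{3} W_{V}\|^{2}_{L^{4}_{T} L^{\infty}_{x}}$ to a sum of Hilbert-Schmidt norms of the type $\sum_{k} \|D^{\gamma} \partial_{x}^{2} \Phi e_{k}\|^{2}_{L^{2}}$, finite under the regularity assumption on $\Phi$, whence a.s.\ finiteness of the $L^{4}_{T} L^{\infty}_{x}$ norm.

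The main obstacle I expect is the handling of the $L^{\infty}_{x}$ supremum in the stochastic setting: the It\^o isometry gives control pointwise in $x$, but moving the supremum outside the expectation requires either the factorization formula for stochastic convolutions of Da Prato-Kwapie\'n-Zabczyk type or a Kolmogorov continuity argument, as in \cite{Deb}. A secondary, essentially bookkeeping difficulty is tracking the precise regularity of $\Phi$ demanded by each of the two bounds, since two $x$-derivatives applied to $W_{V}$ effectively raise the Hilbert-Schmidt norm from $H^{\sigma}$ to $H^{\sigma+2}$. Once the deterministic smoothing and maximal estimates for $V(t)$ are in hand, the stochastic step then reduces to a diagonalization of $\Phi$ along its singular vectors followed by a summation that converges thanks to $\Phi \in L_{2}^{0}$.
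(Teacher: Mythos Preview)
Your outline is correct in structure: the first membership condition is exactly hypothesis~(\ref{cs1}), and the remaining two require a Kato smoothing estimate and a Strichartz/maximal estimate for the stochastic convolution. You also correctly isolate the central difficulty, namely that the It\^o isometry (or BDG) controls moments pointwise in $x$, whereas the target spaces involve $L^{\infty}_{x}$, so one must somehow exchange the supremum in $x$ with the expectation.

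Where your proposal departs from the paper is in how this exchange is carried out. You invoke the Da~Prato--Kwapie\'n--Zabczyk factorization formula or a Kolmogorov-type argument; the paper (Section~\ref{proofs3}, Lemmas~\ref{P3.3} and~\ref{P3.4}) does neither. Instead it uses the interpolation device of Proposition~A.1 in \cite{Deb} (recalled here as Theorem~\ref{A1}): one first establishes an $L^{\infty}_{x}\bigl(L^{q}_{\omega}(L^{2}_{t})\bigr)$ bound on a high derivative of $W_{V}$ via the KPV smoothing identity (Theorem~\ref{L2.1}), together with an $L^{2}_{x}\bigl(L^{q}_{\omega}(L^{2}_{t})\bigr)$ bound on a lower derivative, and then interpolates between these to land in $L^{q}_{x}\bigl(L^{q}_{\omega}(L^{2}_{t})\bigr)$ with a carefully chosen $q=6/\varepsilon$. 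The point of matching the two exponents is that Fubini gives $L^{q}_{x}L^{q}_{\omega}=L^{q}_{\omega}L^{q}_{x}$, so one now controls an $L^{q}_{\omega}$ norm of a Sobolev space $W^{\varepsilon,q}_{x}$, and since $q\varepsilon>1$ the embedding $W^{\varepsilon,q}_{x}\hookrightarrow L^{\infty}_{x}$ finishes the argument. The Strichartz part (Lemma~\ref{P3.4}) follows the same pattern with $q=4+12/\varepsilon$ and the KPV maximal estimate (Theorem~\ref{T2.4}) in place of the smoothing identity.

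The trade-off is this: factorization would in principle give continuous-in-time versions of the stochastic convolution and is a more general-purpose tool, but it requires care with the fractional integral and the specific mixed norms here; the paper's interpolation-plus-Sobolev route is more elementary and tailored to the KPV framework already in use, at the cost of tracking one extra parameter $\varepsilon$ and requiring slightly more regularity on $\Phi$ (effectively $H^{\tilde\sigma+5/2}$ and $H^{\tilde\sigma+2}$ in the two lemmas). Your proposal is a plausible alternative route, but it remains a sketch: you would need to verify that the factorization kernel interacts well with the $L^{4}_{t}L^{\infty}_{x}$ norm, which is not immediate.
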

 \vspace{-2ex}
\begin{proof}
For reader's convenience the proof of Theorem \ref{aux} is postponed to the  section \ref{proofs3}.
\end{proof} 

\begin{corollary}
Assume that $\Phi\in L_2^0(L^2(\mathbb{R});H^\sigma(\mathbb{R}))$ 
and for $\sigma>\frac{3}{4}$ holds
\begin{equation}\nonumber
\frac{\partial^{2}}{\partial x^{2}}W_{V}\in L^{2}\left(\Omega;L_{t}^{\infty}\left(H^{\sigma}_{x}\right)\right).
\end{equation}
Then $\frac{\partial^{2}}{\partial x^{2}}W_{V} \in X_{\sigma}(T)$, $\mathbb{P}$-almost surely for any $T >0$, and  $\sigma$ such that $\frac{3}{4}<\sigma<1$.
\end{corollary}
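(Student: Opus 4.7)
The plan is to combine Theorem \ref{aux} with the strengthened hypothesis of the corollary in a short two-step argument. First I would compare the spaces $X_{\sigma}(T)$ and $\widehat{X}_{\sigma}(T)$: the only clause appearing in the former but absent from the latter is membership in $L^{\infty}(0,T;H^{\sigma}(\mathbb{R}))$. The corollary's assumption $\frac{\partial^{2}}{\partial x^{2}} W_{V} \in L^{2}(\Omega; L^{\infty}_{t}(H^{\sigma}_{x}))$ delivers exactly this missing clause $\mathbb{P}$-almost surely, upon passing to an almost sure realization of the $L^{2}(\Omega)$-valued random variable.

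Next I would apply Theorem \ref{aux} to produce the remaining three conditions defining $X_{\sigma}(T)$. For this I must check the theorem's hypothesis (\ref{cs1}), namely $\frac{\partial^{2}}{\partial x^{2}} W_{V} \in L^{2}(\Omega; L^{2}_{x}(L^{\infty}_{t}))$. This should follow from the corollary's assumptions via a standard stochastic-convolution estimate, using $\Phi \in L_{2}^{0}(L^{2}(\mathbb{R}); H^{\sigma}(\mathbb{R}))$, the isometric property of the unitary group $V(t)$ on $H^{\sigma}$, and the Sobolev embedding $H^{\sigma}(\mathbb{R}) \hookrightarrow L^{\infty}(\mathbb{R})$ valid for $\sigma > \frac{3}{4} > \frac{1}{2}$. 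Granted (\ref{cs1}), Theorem \ref{aux} yields $\frac{\partial^{2}}{\partial x^{2}} W_{V} \in \widehat{X}_{\sigma}(T)$ $\mathbb{P}$-almost surely.

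Combining the two sources of information yields all four conditions required by $X_{\sigma}(T)$, which completes the proof. The main, and essentially only, obstacle is the passage from the $L^{\infty}_{t}(H^{\sigma}_{x})$ regularity given by the corollary's hypothesis to the $L^{2}_{x}(L^{\infty}_{t})$ regularity required by (\ref{cs1}); since the two spaces are not comparable via a naive pointwise Sobolev bound (the sup in $t$ and the $L^{2}$ in $x$ sit in the wrong order), I would exploit the specific Hilbert--Schmidt structure of $\Phi$ together with a maximal inequality for the stochastic convolution driven by the unitary group $V(t)$. The remainder of the argument is routine bookkeeping between the definitions of $\widehat{X}_{\sigma}(T)$ and $X_{\sigma}(T)$.
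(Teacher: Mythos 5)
Your overall architecture is the intended one: $X_{\sigma}(T)$ differs from $\widehat{X}_{\sigma}(T)$ only by the clause $u\in L^{\infty}(0,T;H^{\sigma}(\mathbb{R}))$, the corollary's hypothesis supplies exactly that clause $\mathbb{P}$-a.s., and Theorem \ref{aux} is meant to supply the other three conditions. This is precisely how the paper treats the corollary (it gives no separate proof).

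The gap is in your second step. To invoke Theorem \ref{aux} you must verify (\ref{cs1}), i.e. $\frac{\partial^{2}}{\partial x^{2}}W_{V}\in L^{2}\left(\Omega;L^{2}_{x}\left(L^{\infty}_{t}\right)\right)$, and you propose to derive it from $\Phi\in L_2^0(L^2(\mathbb{R});H^{\sigma}(\mathbb{R}))$ together with the $L^{\infty}_{t}(H^{\sigma}_{x})$ hypothesis. This does not work. As you yourself observe, the norms sit in the wrong order: $\sup_{t}\|v(t)\|_{H^{\sigma}_{x}}<\infty$ controls $\sup_{t}\int|v|^{2}\,dx$ but gives no bound on $\int\bigl(\sup_{t}|v(t,x)|\bigr)^{2}dx$ (a translating bump $v(t,x)=\phi(x-Nt)$ has $L^{\infty}_{t}H^{\sigma}_{x}$ norm independent of $N$ but $L^{2}_{x}(L^{\infty}_{t})$ norm of order $\sqrt{NT}$). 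The appeal to the isometry of $V(t)$ on $H^{\sigma}$ and the embedding $H^{\sigma}\hookrightarrow L^{\infty}$ only yields $L^{\infty}_{t}(L^{\infty}_{x})$ control, not square-integrability in $x$ of the time-supremum. Moreover, the ``maximal inequality for the stochastic convolution'' you invoke is exactly the Kenig--Ponce--Vega-type maximal-function estimate used in the paper via de Bouard--Debussche; applied to $\partial_{x}^{2}W_{V}$ it costs roughly two extra derivatives on $\Phi$ --- compare Lemmas \ref{P3.3} and \ref{P3.4}, whose bounds are phrased in terms of $|\Phi|_{L_{2}^{0,\widetilde{\sigma}+\frac{5}{2}}}$ and $|\Phi|_{L_{2}^{0,\widetilde{\sigma}+2}}$ --- which the corollary's hypothesis $\Phi\in L_2^0(L^2(\mathbb{R});H^{\sigma}(\mathbb{R}))$ does not provide. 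So (\ref{cs1}) is not a consequence of the stated assumptions by any routine argument; it must either be retained as a standing hypothesis carried over from Theorem \ref{aux} (which is evidently how the corollary is meant to be read) or replaced by a strictly stronger integrability assumption on $\Phi$. Your proof as written does not close this step.
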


Now, we are able to formulate the existence and uniqueness result.

\begin{thm} \label{t31}
Assume that $u_{0} \in {_{2}L^{2}} \left( \Omega; H^{1}(\mathbb{R}) \right) \cap  {_{2}L^{4}} \left( \Omega ;L^{2}(\mathbb{R}) \right)$ and it is  $\mathcal{F}_{0}$-measurable and   $\Phi \in L_2^0 \left( L^{2} (\mathbb{R}) ; H^{1} (\mathbb{R}) \right)$. If 
(\ref{cs1}) holds 
then there exists a unique mild solution to the equation (\ref{NL1}) with initial condition
(\ref{NL1WP}), such that ~$u \in {_{2}X_{\sigma}} (T)$ 
 almost surely for some ~$T>0$ and for any $\sigma \in \left(\frac{3}{4},1\right)$. 
\end{thm}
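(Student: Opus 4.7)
My plan is to recast (\ref{mNL1}) as a pathwise deterministic fixed-point equation via the standard shift. Setting $v(t) := u(t) - W_{V}(t)$, the mild formulation becomes
\[
v(t) \,=\, V(t)u_{0} + \int_{0}^{t} V(t-s)\, N\bigl(v(s)+W_{V}(s)\bigr)\, \diff s, \qquad N(w) := w w_{x} + w w_{xxx} + w_{x} w_{xx}.
\]
Under the hypotheses, Theorem \ref{aux} and the corollary place $W_{V}$ and its first two spatial derivatives in $X_{\sigma}(T)$ almost surely, while Lemma \ref{LEM2} applied at the level of $(u_{0})_{2x}$ places $V(t)u_{0}$ in ${_{2}X_{\sigma}}(T)$. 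The task thus reduces to solving the above integral equation pathwise for $v\in{_{2}X_{\sigma}}(T)$ on a possibly random interval $[0,T(\omega)]$, after which $u := v + W_{V}$ recovers the mild solution (\ref{mNL1})--(\ref{NL1WP}).

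I would then run a Banach contraction argument on a closed ball of ${_{2}X_{\sigma}}(T)$. The four norms built into $X_{\sigma}(T)$ are tailored to the sharp linear estimates of Kenig--Ponce--Vega \cite{KPV91,KPV93} for the unitary group $V(t)$ generated by $-\partial_{x}^{3}$: the local smoothing effect gaining one derivative in $L^{\infty}_{x} L^{2}_{t}$, the maximal function estimate in $L^{2}_{x} L^{\infty}_{t}$, the Strichartz-type bound in $L^{4}_{t} L^{\infty}_{x}$, and the energy estimate in $L^{\infty}_{t} H^{\sigma}_{x}$. For the Duhamel term, each of $u u_{x}$, $u u_{xxx}$ and $u_{x} u_{xx}$ is split by H\"older into a product of factors controlled in $L^{2}_{x} L^{\infty}_{t}$ or $L^{4}_{t} L^{\infty}_{x}$ paired with one factor in $L^{\infty}_{x} L^{2}_{t}$, the last absorbing the extra spatial derivative via the smoothing gain. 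A positive power of $T$ emerges from H\"older in time, and choosing $T(\omega)$ small enough in terms of the ball radius and the (random) norms of $u_{0}$ and $W_{V}$ forces both stability and contractivity. Repeating the computation for the equation satisfied by $v_{xx}$ yields the ${_{2}X_{\sigma}}(T)$ bound; this is exactly where the reinforced hypotheses $u_{0}\in {_{2}L^{2}}(\Omega;H^{1})\cap {_{2}L^{4}}(\Omega;L^{2})$ and (\ref{cs1}) feed in, via Lemma \ref{LEM2} and Theorem \ref{aux} applied twice differentiated.

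The main obstacle is the quasi-linear term $u u_{xxx}$, which carries one more derivative than the classical KdV nonlinearity and rules out a naive contraction in $H^{\sigma}$; it is precisely the one-derivative smoothing of KPV that keeps the estimate closed, and only for $\sigma > \tfrac{3}{4}$. Lifting the solution from $X_{\sigma}(T)$ to ${_{2}X_{\sigma}}(T)$ repeats this difficulty at a higher level, since the equation for $v_{xx}$ generates $u u_{xxxxx}$, $u_{x} u_{xxxx}$ and $u_{xx} u_{xxx}$ after Leibniz expansion, each of which must again be absorbed by the smoothing gain. Once the contraction closes on $[0,T(\omega)]$, Banach's fixed-point theorem gives a unique $v\in{_{2}X_{\sigma}}(T)$ almost surely; uniqueness on the random interval transfers to $u := v + W_{V}$ in the same class, completing the proof.
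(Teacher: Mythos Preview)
Your proposal is correct and rests on the same ingredients as the paper: the Kenig--Ponce--Vega smoothing/maximal/Strichartz bounds packaged into the bilinear Duhamel estimate (Proposition~3.5 in \cite{Deb}), Lemma~\ref{LEM2} and Theorem~\ref{aux} for the linear and stochastic parts, the Leibniz expansion of $\partial_{xx}\mathcal{T}(u)$ to lift from $X_{\sigma}(T)$ to ${_{2}X_{\sigma}}(T)$, and a random choice of $T$ driven by the $T^{1/2}$ factor.

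The one organizational difference is that you perform the da~Prato--Zabczyk shift $v:=u-W_{V}$ and run the contraction on $v$, so that $W_{V}$ enters through the expanded nonlinearity $N(v+W_{V})$; the paper instead defines the fixed-point map $\mathcal{T}$ directly on $u$, keeping $W_{V}$ as an additive term whose $X_{\sigma}(T)$-norm simply feeds into the radius condition~(\ref{KONTRA}). Your route makes the problem explicitly pathwise deterministic at the cost of bookkeeping the cross terms $v\,\partial_{x}W_{V}$, $W_{V}\,\partial_{x}v_{2x}$, etc.; the paper's route avoids those cross terms but must still invoke Theorem~\ref{aux} to place $|W_{V}|_{X_{\sigma}(T)}$ on the right-hand side. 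Since $W_{V}\in{_{2}X_{\sigma}}(T)$ a.s.\ under the hypotheses, the same bilinear estimate controls the cross terms, and the two arguments are equivalent.
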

 
\begin{proof} \label{p31}
As we have already written, in the proof we follow the method used in  de Bouard and Debussche \cite{Deb}. 

We introduce the mapping $\mathcal{T}$ defined as follows 
\begin{equation} \label{defT}
\mathcal{T}(u) := V(t)u_{0} + \int_{0}^{t} V(t-\tau) (u \frac{\partial u}{\partial x} + u \frac{\partial^{3} u}{\partial x^{3}} + \frac{\partial u}{\partial x}\frac{\partial^{2} u}{\partial x^{2}}) \diff \tau +  W_V(t), \quad t\ge 0. 
\end{equation}
Then
\begin{eqnarray} \label{dT1}
\mathcal{T}(u) 
& \!\! =V(t)u_{0} \!\! &+ \int_{0}^{t} V(t-\tau) (u \partial_{x} u + u \partial_{x} u_{2x} + u_{2x} \partial_{x} u) \diff \tau + W_V(t)) = \nonumber \\
&\!\!=  V(t)u_{0} \!\! &+ \int_{0}^{t} V(t-\tau) (u \partial_{x} u) \diff \tau + \int_{0}^{t} V(t-\tau) (u \partial_{x} u_{2x}) \diff \tau \\
&  \!\! &+ \int_{0}^{t} V(t-\tau) (u_{2x} \partial_{x} u) \diff \tau + W_V(t), \quad t\ge 0. \nonumber
\end{eqnarray}

 We want to obtain the following condition
\begin{equation}\label{WSIEBIE}
\left[\; u\in \left\{u: u\in X_{\sigma}(T), u_{2x} \in X_{\sigma}(T)\right\}_{}^{} \;\right] \; \Longrightarrow \; \left[\;  \mathcal{T}(u) \in \left\{u: u\in X_{\sigma}(T), u_{2x} \in X_{\sigma}(T)\right\}_{}^{} \; \right].
\end{equation}

From Theorem 3.2 and Proposition 3.5 in  de Bouard and Debussche 
\cite{Deb} and because ~$u,u_{2x}\in X_{\sigma}(T)$, the mapping $\mathcal{T}$ maps $X_{\sigma}(T)$ into itself if $u_{0} \in H^{\sigma}(\mathbb{R})$. We will check when  $\frac{\partial ^{2}}{\partial x^{2}}\mathcal{T}(u)\in X_{\sigma}(T)$. We have 
\begin{eqnarray}\label{T1DIFF}
\frac{\partial ^{2}}{\partial x^{2}}\mathcal{T}(u) & = & \frac{\partial ^{2}}{\partial x^{2}} V(t)u_{0} + \frac{\partial ^{2}}{\partial x^{2}}\! \int_{0}^{t} \!V(t-\tau) (u u_{x}) \diff \tau + \frac{\partial ^{2}}{\partial x^{2}}\! \int_{0}^{t} \!V(t-\tau) (u u_{3x}) \diff \tau   \nonumber \\
&& \hspace{1ex}
+ \frac{\partial ^{2}}{\partial x^{2}}\! \int_{0}^{t}\! \!V(t-\tau) (u_{x} u_{2x}) \diff \tau \!+\! \frac{\partial ^{2}}{\partial x^{2}} \! \int_{0}^{t}\! \!V(t-\tau) \Phi\, dW(\tau)\\
&=& V(t)(u_{0})_{2x} \!+\! \! \int_{0}^{t}\! \!V(t-\tau) (uu_{3x} \!+\! 3u_{x}u_{2x}) \diff \tau + \! \int_{0}^{t}\! \!V(t-\tau) (u_{2x}u_{3x} + 2u_{x}u_{4x} + uu_{5x}) \diff \tau \nonumber \\
& &\hspace{1ex} + \! \int_{0}^{t}\! \!V(t-\tau) (3u_{2x}u_{3x} + u_{x}u_{4x}) \diff \tau +  \frac{\partial ^{2}}{\partial x^{2}}\! \int_{0}^{t}\! \!V(t-\tau) \Phi\, dW(\tau). \nonumber 
\end{eqnarray}


Let $u\in {} _{2}X_{\sigma}(T)$. Then $v=u_{2x}\in {} _{2}X_{\sigma}(T)$ and $v_{2x}\in {} _{2}X_{\sigma}(T)$. We have
\begin{equation}\label{RR1}
\begin{aligned}
\int_{0}^{t} V(t-\tau) (uu_{3x} + 3u_{x}u_{2x}) \diff \tau &= \int_{0}^{t} V(t-\tau) u\partial_{x}u_{2x}  \diff \tau + 3\int_{0}^{t} V(t-\tau) u_{2x}\partial_{x}u \diff \tau = \\
&= \int_{0}^{t} V(t-\tau) u\partial_{x}v  \diff \tau + 3\int_{0}^{t} V(t-\tau) v\partial_{x}u \diff \tau;
\end{aligned} 
\end{equation}
\begin{equation}\label{RR2}
\begin{aligned}
\int_{0}^{t} V(t-\tau) (u_{2x}u_{3x} + 2u_{x}u_{4x} + uu_{5x}) \diff \tau &= \int_{0}^{t} V(t-\tau) u_{2x}\partial_{x}u_{2x} \diff \tau + 2\int_{0}^{t} V(t-\tau) u_{4x}\partial_{x}u \diff \tau + \\
&+ \int_{0}^{t} V(t-\tau) u\partial_{x}u_{4x} \diff \tau \\
& = \int_{0}^{t} V(t-\tau) v\partial_{x}v \diff \tau + 2\int_{0}^{t} V(t-\tau) v_{2x}\partial_{x}u \diff \tau \\ & + \int_{0}^{t} V(t-\tau) u\partial_{x}v_{2x} \diff \tau;
\end{aligned}
\end{equation} 
\vspace{-3ex}
\begin{equation}\label{RR3}
\begin{aligned}
\int_{0}^{t} V(t-\tau) (3u_{2x}u_{3x} + u_{x}u_{4x}) \diff \tau &=  3\int_{0}^{t} V(t-\tau) u_{2x}\partial_{x}u_{2x} \diff \tau + \int_{0}^{t} V(t-\tau) u_{4x}\partial_{x}u \diff \tau  \\
&= 3\int_{0}^{t} V(t-\tau) v\partial_{x}v \diff \tau + \int_{0}^{t} V(t-\tau) v_{2x}\partial_{x}u \diff \tau.
\end{aligned}
\end{equation}

From Theorem 3.2, Proposition 3.5  de Bouard and Debussche \cite{Deb}, 
 Lemma \ref{LEM2} and Theorem \ref{aux} above, and equations (\ref{RR1})-(\ref{RR3}) we obtain that the mapping $\mathcal{T}$ maps the set $_{2}X_{\sigma}(T)$ into itself if $u_{0} \in {} _{2}H^{\sigma}(\mathbb{R})$ and $\Phi \in {} _{2}\mathcal{L}_{2} \left( L^{2}(\mathbb{R},H^{\sigma}(\mathbb{R}))\right)$. We want to find a ball $\mathcal{B}$ in $_{2}X_{\sigma}(T)$ centered at point $0$ and radius $2R$ such that the mapping 
 $\restr{\mathcal{T}}{\mathcal{B}}$ is contraction. More precisely, we want to have the following conditions 
\begin{equation}\label{NL1KON}
\begin{aligned}
\textrm{(i) }& \left|u\right|_{X_{\sigma}(T)} < 2R \Longrightarrow \left|\mathcal{T}(u)\right|_{X_{\sigma}(T)} <2R;\\
\textrm{(ii) }& \left|\mathcal{T}(u) - \mathcal{T}(v) \right|_{X_{\sigma}(T)} < \left| u-v \right|_{X_{\sigma}(T)}, \qquad \left|u\right|_{X_{\sigma}(T)}, \left|v\right|_{X_{\sigma}(T)} <2R.
\end{aligned}
\end{equation}

First, let us note that for any $u\in X_{\sigma}(T)$ there exists
 $M_{u}>0$ such that 
$$ 
\left|u_{2x}\right|_{X_{\sigma}(T)} = M_{u} \left|u\right|_{X_{\sigma}(T)}.
$$ 
Denote 
\begin{equation}\nonumber
M:=\sup \lbrace M_{u}: u\in X_{\sigma}(T) \rbrace.
\end{equation}
Then
\begin{equation}\label{sup}
\left|u_{2x}\right|_{X_{\sigma}(T)} \leq M \left|u\right|_{X_{\sigma}(T)}.\\
\end{equation}

From (\ref{sup}) and Proposition 3.5.  de Bouard and Debussche 
\cite{Deb} we obtain the following estimate 
\begin{equation}\nonumber
\begin{aligned}
\left|\mathcal{T}(u) \right|_{X_{\sigma}(T)} & \leq \; C_{1}(\sigma,T) |u_{0}|_{H^{\sigma}(\mathbb{R})} + C_{2}(\sigma,T) T^{\frac{1}{2}} |u|^{2}_{X_{\sigma}(T)} + C_{3}(\sigma,T) T^{\frac{1}{2}} |u|_{X_{\sigma}(T)}|u_{2x}|_{X_{\sigma}(T)}  \\
& \hspace{3ex} +  C_{4}(\sigma,T) T^{\frac{1}{2}} |u_{2x}|_{X_{\sigma}(T)}|u|_{X_{\sigma}(T)} 
+ |W_V|_{X_{\sigma}(T)}  \\
& \leq \; C_{1}(\sigma,T) |u_{0}|_{H^{\sigma}(\mathbb{R})} + C_{2}(\sigma,T) T^{\frac{1}{2}} |u|^{2}_{X_{\sigma}(T)} +  C_{3}(\sigma,T) T^{\frac{1}{2}} M |u|^{2}_{X_{\sigma}(T)} \\
& \hspace{3ex}+ C_{4}(\sigma,T) T^{\frac{1}{2}} M |u|^{2}_{X_{\sigma}(T)} + |W_V|_{X_{\sigma}(T)}.
\end{aligned}
\end{equation}
Here and below we write for shortening $W_V$ instead of $W_V(t), ~t\ge 0$.

 Since $C_{i}(\sigma,T)$, $i=1,2,3,4$, are nondecreasing with respect to $T$, we can use \linebreak 
$C(\sigma,T) := \underset{T}{\max} \lbrace C_{1}(\sigma,T), C_{2}(\sigma,T), C_{3}(\sigma,T), C_{4}(\sigma,T) \rbrace$, which is nondecreasing with respect to $T$ to our estimate. We obtain
\begin{equation}\nonumber
\begin{aligned}
\left|\mathcal{T}(u) \right|_{X_{\sigma}(T)} & \leq  C(\sigma,T) |u_{0}|_{H^{\sigma}(\mathbb{R})} + C(\sigma,T) T^{\frac{1}{2}} |u|^{2}_{X_{\sigma}(T)} + C(\sigma,T) T^{\frac{1}{2}} M |u|^{2}_{X_{\sigma}(T)}  \\
& \hspace{3ex} + C(\sigma,T) T^{\frac{1}{2}} M |u|^{2}_{X_{\sigma}(T)}
 + |W_V|_{X_{\sigma}(T)}\\
& = C(\sigma,T) |u_{0}|_{H^{\sigma}(\mathbb{R})} + C(\sigma,T) T^{\frac{1}{2}} |u|^{2}_{X_{\sigma}(T)} \left ( 1 + 2M \right) + |W_V|_{X_{\sigma}(T)}.
\end{aligned}
\end{equation}

Now, we shall find $R$ fulfilling condition (\ref{NL1KON})(i). Assume that 
$|u|_{X_{\sigma}(T)}<2R$. Then we have 
\begin{equation}\nonumber
\begin{aligned}
\left|\mathcal{T}(u) \right|_{X_{\sigma}(T)} \leq C(\sigma,T) |u_{0}|_{H^{\sigma}(\mathbb{R})} + C(\sigma,T) T^{\frac{1}{2}} 4R^{2} \left ( 1 + 2M \right) + |W_V|_{X_{\sigma}(T)}.
\end{aligned}
\end{equation}
We want to receive 
\begin{equation}\nonumber
C(\sigma,T) |u_{0}|_{H^{\sigma}(\mathbb{R})} + C(\sigma,T) T^{\frac{1}{2}} 4R^{2} \left ( 1 + 2M \right) + |W_V|_{X_{\sigma}(T)} \leq 2R.
\end{equation}
This is equivalent to
\begin{equation}\nonumber
C(\sigma,T) |u_{0}|_{H^{\sigma}(\mathbb{R})}  + |W_V|_{X_{\sigma}(T)} \leq 2R - C(\sigma,T) T^{\frac{1}{2}} 4R^{2} \left ( 1 + 2M \right).
\end{equation}
Let us note that it is enough to have such $R$ that
\begin{equation}\nonumber
C(\sigma,T) |u_{0}|_{H^{\sigma}(\mathbb{R})}  + |W_V|_{X_{\sigma}(T)}\; \leq \;R \;\leq \;2R - C(\sigma,T) T^{\frac{1}{2}} 4R^{2} \left ( 1 + 2M \right).
\end{equation}
From the second inequality we obtain 
\begin{equation}\nonumber
\begin{aligned}
R \leq\; &\; 2R - C(\sigma,T) T^{\frac{1}{2}} 4R^{2} \left ( 1 + 2M \right), \qquad \mbox{then} \\
0 \leq\; &\; R - C(\sigma,T) T^{\frac{1}{2}} 4R^{2} \left ( 1 + 2M \right) \\
\mbox{and} \qquad 0 \leq\; &\; R [1 - 4R C(\sigma,T) T^{\frac{1}{2}} \left ( 1 + 2M \right)], \\
\mbox{and} \qquad 0 \leq\; &\; 1 - 4R C(\sigma,T) T^{\frac{1}{2}} \left ( 1 + 2M \right), \\
\mbox{and~finally} \qquad 
1 \geq\; &\; 4R C(\sigma,T) T^{\frac{1}{2}} \left ( 1 + 2M \right).
\end{aligned}
\end{equation}

Hence, in order to obtain (\ref{NL1KON})~ (i), the following inequalities must  hold 
\begin{equation}\label{NL1KONi}
\begin{cases}
C(\sigma,T) |u_{0}|_{H^{\sigma}(\mathbb{R})}  + |W_V|_{X_{\sigma}(T)} & \leq  R \\
4R C(\sigma,T) T^{\frac{1}{2}} \left ( 1 + 2M \right) & \leq  1 .
\end{cases} 
\end{equation}

Let us note that the second condition in (\ref{NL1KONi}) will hold too, if 
\begin{equation}\label{KONi} \index{$\kappa$ - any constant $>1$}
\kappa  4R C(\sigma,T) T^{\frac{1}{2}} \left ( 1 + 2M \right)  \leq  1
\end{equation}
for any fixed constant $\kappa >1$.

 Now, we will check when the condition (\ref{NL1KON})(ii) holds. First, we shall estimate the norm 
$\left|\mathcal{T}(u) - \mathcal{T}(v) \right|_{X_{\sigma}(T)}$. We can write
\begin{equation}\nonumber
\begin{aligned}
\left|\mathcal{T}(u) \right. &- \left. \mathcal{T}(v) \right|_{X_{\sigma}(T)} =\\
 &=  \left| \int_{0}^{t} V(t-\tau) (u \partial_{x} u - v \partial_{x}v) \diff \tau + \int_{0}^{t} V(t-\tau) (u \partial_{x} u_{2x} - v \partial_{x} v_{2x}) \diff \tau  \right. \\
& \hspace{4ex} +  \left. \int_{0}^{t} V(t-\tau) (u_{2x} \partial_{x} u - v_{2x} \partial_{x}v) \diff \tau  \right|_{X_{\sigma}(T)} \\
&=  \frac{1}{2} \left| \int_{0}^{t} V(t-\tau) \left[ u \partial_{x} (u-v) + (u-v) \partial_{x} u + v \partial_{x} (u-v) + (u-v) \partial_{x} v \right] \diff \tau \right. \\
& \hspace{4ex} +  \int_{0}^{t} V(t-\tau) \left[ u \partial_{x} (u-v)_{2x} + (u-v) \partial_{x} u_{2x} + v \partial_{x} (u-v)_{2x} + (u-v) \partial_{x} v_{2x} \right] \diff \tau   \\
& \hspace{4ex} + \left. \int_{0}^{t} V(t-\tau) \left[ u_{2x} \partial_{x} (u-v) + (u-v)_{2x} \partial_{x} u + v_{2x} \partial_{x} (u-v) + (u-v)_{2x} \partial_{x} v \right] \diff \tau    \right|_{X_{\sigma}(T)}   \\
& \leq  \frac{1}{2} \left| \int_{0}^{t} V(t-\tau) \left[ u \partial_{x} (u-v) + (u-v) \partial_{x} u + v \partial_{x} (u-v) + (u-v) \partial_{x} v \right] \diff \tau \right|_{X_{\sigma}(T)}  \\
& \hspace{4ex} +  \frac{1}{2} \left|\int_{0}^{t} V(t-\tau) \left[ u \partial_{x} (u-v)_{2x} + (u-v) \partial_{x} u_{2x} + v \partial_{x} (u-v)_{2x} + (u-v) \partial_{x} v_{2x} \right] \diff \tau \right|_{X_{\sigma}(T)}  \\
& \hspace{4ex} +  \frac{1}{2} \left|\int_{0}^{t} V(t-\tau) \left[ u_{2x} \partial_{x} (u-v) + (u-v)_{2x} \partial_{x} u + v_{2x} \partial_{x} (u-v) + (u-v)_{2x} \partial_{x} v \right] \diff \tau \right|_{X_{\sigma}(T)}  .
\end{aligned}\end{equation}

Then
\begin{equation}\nonumber
\begin{aligned}
\left|\mathcal{T}(u) \right. &- \left. \mathcal{T}(v) \right|_{X_{\sigma}(T)} \\
&\leq  \frac{1}{2} C(\sigma,T)T^{\frac{1}{2}}\left| u \right|_{X_{\sigma}(T)} \left| u-v \right|_{X_{\sigma}(T)} + \frac{1}{2} C(\sigma,T)T^{\frac{1}{2}}\left| u-v \right|_{X_{\sigma}(T)} \left| u \right|_{X_{\sigma}(T)}  \\
& \hspace{4ex} + \frac{1}{2} C(\sigma,T)T^{\frac{1}{2}}\left| v \right|_{X_{\sigma}(T)} \left| u-v \right|_{X_{\sigma}(T)} + \frac{1}{2} C(\sigma,T)T^{\frac{1}{2}}\left| u-v \right|_{X_{\sigma}(T)} \left| v \right|_{X_{\sigma}(T)} \\
& \hspace{4ex} +  \frac{1}{2} C(\sigma,T)T^{\frac{1}{2}}\left| u \right|_{X_{\sigma}(T)} \left| (u-v)_{2x} \right|_{X_{\sigma}(T)} + \frac{1}{2} C(\sigma,T)T^{\frac{1}{2}}\left| u-v \right|_{X_{\sigma}(T)} \left| u_{2x} \right|_{X_{\sigma}(T)}  \\
& \hspace{4ex} + \frac{1}{2} C(\sigma,T)T^{\frac{1}{2}}\left| v \right|_{X_{\sigma}(T)} \left| (u-v)_{2x} \right|_{X_{\sigma}(T)} + \frac{1}{2} C(\sigma,T)T^{\frac{1}{2}}\left| u-v \right|_{X_{\sigma}(T)} \left| v_{2x} \right|_{X_{\sigma}(T)}  \\
& \hspace{4ex} +  \frac{1}{2} C(\sigma,T)T^{\frac{1}{2}}\left| u_{2x} \right|_{X_{\sigma}(T)} \left| u-v \right|_{X_{\sigma}(T)} + \frac{1}{2} C(\sigma,T)T^{\frac{1}{2}}\left| (u-v)_{2x} \right|_{X_{\sigma}(T)} \left| u \right|_{X_{\sigma}(T)}  \\
& \hspace{4ex} + \frac{1}{2} C(\sigma,T)T^{\frac{1}{2}}\left| v_{2x} \right|_{X_{\sigma}(T)} \left| u-v \right|_{X_{\sigma}(T)} + \frac{1}{2} C(\sigma,T)T^{\frac{1}{2}}\left| (u-v)_{2x} \right|_{X_{\sigma}(T)} \left| v \right|_{X_{\sigma}(T)} \\
&=  \frac{1}{2} C(\sigma,T)T^{\frac{1}{2}} \left| u-v \right|_{X_{\sigma}(T)} \left[ 2 \left| u \right|_{X_{\sigma}(T)} + 2 \left| v \right|_{X_{\sigma}(T)} + 2 \left| u_{2x} \right|_{X_{\sigma}(T)} + 2 \left| v_{2x} \right|_{X_{\sigma}(T)} \right] + \\
& \hspace{4ex} +  \frac{1}{2}  C(\sigma,T)T^{\frac{1}{2}} \left| (u-v)_{2x} \right|_{X_{\sigma}(T)} \left[ 2 \left| u \right|_{X_{\sigma}(T)} + 2 \left| v \right|_{X_{\sigma}(T)} \right]  \\
&\leq  C(\sigma,T)T^{\frac{1}{2}} \left| u-v \right|_{X_{\sigma}(T)} \left[  \left| u \right|_{X_{\sigma}(T)} +  \left| v \right|_{X_{\sigma}(T)} +  M \left| u \right|_{X_{\sigma}(T)} + M \left| v \right|_{X_{\sigma}(T)} \right]  \\
& \hspace{4ex}  +  C(\sigma,T)T^{\frac{1}{2}} M \left| u-v \right|_{X_{\sigma}(T)} \left[  \left| u \right|_{X_{\sigma}(T)} +  \left| v \right|_{X_{\sigma}(T)} \right].
\end{aligned}\end{equation} 

Finally we have 
\begin{equation}\nonumber\begin{aligned}
\left|\mathcal{T}(u) \right. &- \left. \mathcal{T}(v) \right|_{X_{\sigma}(T)} \\
& \le  C(\sigma,T)T^{\frac{1}{2}} \left| u-v \right|_{X_{\sigma}(T)} \left[  \left| u \right|_{X_{\sigma}(T)} +  \left| v \right|_{X_{\sigma}(T)} +  M \left| u \right|_{X_{\sigma}(T)} + M \left| v \right|_{X_{\sigma}(T)} \right]  \\
& \hspace{4ex}  +  C(\sigma,T)T^{\frac{1}{2}}  \left| u-v \right|_{X_{\sigma}(T)} \left[  M \left| u \right|_{X_{\sigma}(T)} +  M \left| v \right|_{X_{\sigma}(T)} \right]  \\
&=  C(\sigma,T)T^{\frac{1}{2}} \left| u-v \right|_{X_{\sigma}(T)} \left[ \left| u \right|_{X_{\sigma}(T)} (2M + 1) + \left| v \right|_{X_{\sigma}(T)} (2M + 1) \right]  \\
&=  C(\sigma,T)T^{\frac{1}{2}} \left| u-v \right|_{X_{\sigma}(T)} \left( \left| u \right|_{X_{\sigma}(T)} +  \left| v \right|_{X_{\sigma}(T)} \right) (2M + 1).
\end{aligned}
\end{equation}

Since  $\left| u \right|_{X_{\sigma}(T)} \leq 2R$ and $\left| v \right|_{X_{\sigma}(T)} \leq 2R$, we have
\begin{equation}\label{pom}
\left|\mathcal{T}(u) - \mathcal{T}(v) \right|_{X_{\sigma}(T)} \leq 4R C(\sigma,T)T^{\frac{1}{2}} \left| u-v \right|_{X_{\sigma}(T)}(2M + 1).
\end{equation}
From (\ref{KONi}) we know that 
\begin{equation}\nonumber
  4R C(\sigma,T) T^{\frac{1}{2}} \left ( 2M+1 \right)  \leq  \frac{1}{\kappa}, 
\end{equation}
so, putting this into  (\ref{pom}), we can write
\begin{equation}\nonumber
\left|\mathcal{T}(u) - \mathcal{T}(v) \right|_{X_{\sigma}(T)} \leq \frac{1}{\kappa} \left| u-v \right|_{X_{\sigma}(T)}. 
\end{equation}
Hence, the mapping $\restr{\mathcal{T}}{\mathcal{B}}$ is contraction if
 $\frac{1}{\kappa} \left| u-v \right|_{X_{\sigma}(T)} < \left| u-v \right|_{X_{\sigma}(T)}$, what is satisfied for any $\kappa > 1$.
So, we have to choose $R_{0}$ and  $T$ such that  
\begin{equation}\label{KONTRA}
\begin{cases}
C(\sigma,T) |u_{0}|_{H^{\sigma}(\mathbb{R})}  + |W_V|_{X_{\sigma}(T)} & \leq  R_{0} \\
\kappa 4R_{0} C(\sigma,T) T^{\frac{1}{2}} \left ( 1 + 2M \right) & \leq  1 \quad \textrm{for some constant~} \;\;\kappa > 1 .
\end{cases}
\end{equation}

\begin{remark}
In order to do this it is enough to take
\begin{equation}\nonumber
\begin{aligned}
M:=\sup \lbrace M_{u}: u\in X_{\sigma}(T), \left| u \right|_{X_{\sigma}(T)} \leq 4R \rbrace .\\
\end{aligned}
\end{equation}
\end{remark}
\begin{proof}
We estimated by $M$ only terms $\left| u \right|_{X_{\sigma}(T)}$, $\left| v \right|_{X_{\sigma}(T)}$ and $\left| u-v \right|_{X_{\sigma}(T)}$. Since $\left| u \right|_{X_{\sigma}(T)} \leq 2R$ and $\left| v \right|_{X_{\sigma}(T)} \leq 2R$, so $\left| u-v \right|_{X_{\sigma}(T)} \leq 4R$.
\end{proof}

 Hence, the mapping $\mathcal{T}$ maps the ball $\mathcal{B}$ in $_{2}X_{\sigma}(T)$ centered at $0$ with radius $2R$ into itself and, restricted to this ball, the mapping $\mathcal{T}$ is contraction. By Banach contraction theorem, the mapping $\mathcal{T}$ has fixed point in the set $_{2}X_{\sigma}(T)$, which is a unique solution to the equation (\ref{NL1}) with initial condition (\ref{NL1WP}). 
\end{proof}

\section{Near-identity transformation for KdV2}\label{Snit}

The famous Korteweg - de Vries equation Korteweg and de Vries \cite{KdV} was first obtained
in consideration of shallow water wave problem with ideal fluid model.
It is assumed that the fluid is inviscid and its motion is irrotational.
Then the set of hydrodynamic (Euler's) equations with appropriate boundary conditions at the flat bottom and unknown surface is obtained. Scaling transformation to dimensionless variables introduces small parameters that allow us to apply perturbation approach. First order perturbation approach
leads to KdV equation (below written in a fixed reference frame)
\begin{equation} \label{kdv}
\eta_t  +  \eta_x + \frac{3}{2} \alpha\,\eta\eta_x+ \frac{1}{6}\beta\, \eta_{3x} =0 .  
\end{equation}
More exact, second order perturbation approach gives the extended KdV Marchant and Smyth 
\cite{MS90} called also KdV2 Karczewska et.al. \cite{KRR}, Karczewska et.al.\cite{KRI}, Karczewska et.al.\cite{KRI2} which has the following form
\begin{equation} \label{kdv2}
\eta_t  +  \eta_x + \frac{3}{2} \alpha\,\eta\eta_x+ \frac{1}{6}\beta\, \eta_{3x} -\frac{3}{8}\alpha^2\eta^2\eta_x +
  \alpha\beta\,\left(\frac{23}{24}\eta_x\eta_{2x}+\frac{5}{12}\eta\eta_{3x} \right)+\frac{19}{360}\beta^2\eta_{5x} =0 .  
\end{equation}
In both equations (\ref{kdv}) and (\ref{kdv2}) there appear parameters 
 $\alpha,\beta$, which should be small. Parameter $\alpha:= \frac{A}{h}$
is the ratio of wave amplitude  $A$ to water depth $h$  and determines nonlinear terms. Parameter $\beta:=(\frac{h}{l})^2$, where $l$ is an  average wavelength  describes the dispersion properties. When $\alpha\approx\beta \ll 1$ we have a classical shallow water problem. 
However, our recent paper Infeld et.al. \cite{IKRR} showed that exact solutions of KdV2 (\ref{kdv2}) occur when $\frac{\beta}{\alpha}\lesssim 10^{-4}$. Therefore for further considerations we can safely neglect in (\ref{kdv2}) the last term with fifth derivative.

Transformation to a moving reference frame
\begin{equation} \label{ruch}
 x' =x-t \qquad \mbox{and} \qquad t'=t
\end{equation}
yields KdV2 equation in the form
\begin{equation} \label{kdv2'}
\eta_{t'}   + \frac{3}{2} \alpha\,\eta\eta_{x'}+ \frac{1}{6}\beta\, \eta_{3x'} -\frac{3}{8}\alpha^2\eta^2\eta_{x'} +
  \alpha\beta\,\left(\frac{23}{24}\eta_{x'}\eta_{2x'}+\frac{5}{12}\eta\eta_{3x'} \right) =0 .  
\end{equation}
In next steps we drop signs $'$ at $x'$ and $t'$, having in mind that (\ref{kdv2'}) represents the KdV2 in a moving frame.

 Kodama \cite{Kodama} showed that several nonlinear partial differential equations are {\tt asymptotically equivalent}. This term means that solutions to these equations converge to the same solution when parameters 
 $\alpha,\beta\to 0$. Kodama and several other authors Dullin et.al. 
 \cite{Dull2001}, Grimshaw \cite{Grimshaw}, Grimshaw et. al. \cite{GrimLect} have shown that asymptotically equivalent
equations are related to each other by near-identity transformation (NIT).

Let us introduce Near Identity Transformation (NIT for short) in the form used in  Dullin et.al. \cite{Dull2001}
\begin{equation} \label{nit}
\eta = \eta' \pm \alpha a \eta'^2 \pm \beta b \eta'_{xx}+\cdots
\end{equation}

[In the sequel we set the sign $+$. Then the inverse transformation, up to $O(\alpha^2)$ is \linebreak $\eta' = \eta -\alpha a\eta^2 -\beta b\eta_{xx}+\cdots$]

NIT preserves the structure of the equation
 (\ref{kdv2'}), at most altering some coefficients. Insertion (\ref{nit}) into (\ref{kdv2'}) gives (up to 2nd order in $\alpha,\beta$)
\begin{eqnarray} \label{3a}
\eta'_t + \eta'_x  \!\!&\!\!+\!\!& \!\!  \alpha \left[\left(\frac{3}{2}+2a\right)\eta'\eta'_x
+ 2 a \eta'\eta'_t\right] +\beta  \left[\left(\frac{1}{6}+b\right)\eta'_{3x}+b \eta'_{xxt} \right]  \\ \!\!&\!\!+\!\!& \!\! 
\alpha\beta \left\{\left[\left(\frac{23}{24}+a+ \frac{3}{2}b \right)\eta'_x\eta'_{2x} \right] + \left[\left(\frac{5}{12}+\frac{1}{3}a+ \frac{3}{2}b \right)\eta'\eta'_{3x} \right] \right\}
 \nonumber \\ \!\!&\!\!+\!\!& \!\!   
\alpha^2 \left(-\frac{3}{8}+\frac{9}{2} a\right)\eta'^2\eta'_x
+\beta^2  \frac{1}{6}b\eta'_{5x}  =0.\nonumber 
\end{eqnarray}

Since terms with derivatives with respect to $t$ appear with coefficients $\alpha$ and $\beta$, 
we can replace them by appropriate expressions obtained from (\ref{kdv2}) limited to first order (that is from KdV)
\begin{equation} \label{ept}
\eta'_t = -\eta'_x - \frac{3}{2}\alpha \eta'\eta'_x  -\frac{1}{6} \beta\eta'_{3x}
\end{equation}
and
\begin{equation} \label{epxxt}
\eta'_{xxt} = \partial_{xx}\left( -\eta'_x - \frac{3}{2}\alpha \eta'\eta'_x -\frac{1}{6} \beta\eta'_{3x}\right) = -\eta'_{3x}  - \frac{3}{2}\alpha (3\eta'_x\eta'_{2x}+ \eta'\eta'_{3x})   -\frac{1}{6} \beta\eta'_{5x}.
\end{equation}
Then terms (\ref{ept}) and (\ref{epxxt}) cause the following changes
\begin{eqnarray} \label{epop}
\alpha\,2a\eta'\eta'_t+ \beta\, b \eta'_{xxt}  \!\!&\!\!= \!\!&\!\! -2\alpha a\eta' \left( \eta'_x + \frac{3}{2}\alpha \eta'\eta'_x  +\frac{1}{6} \beta\eta'_{3x} \right) 
 -\beta b
\left[ \eta'_{3x}  + \frac{3}{2}\alpha (3\eta'_x\eta'_{2x}+ \eta'\eta'_{3x}) +\frac{1}{6} \beta\eta'_{5x}\right]
 \nonumber \\  \!\!& \!\!= \!\!& \!\!
 -2\alpha a \,\eta'\eta'_x -3\alpha^2 a \eta'^2\eta'_x  -\beta b \eta'_{3x} 
-\alpha\beta\left[ \frac{1}{2} b \,\eta'_x\eta'_{2x} + \left(\frac{1}{3} a +  \frac{3}{2} b\right) \eta'\eta'_{3x} \right] -\frac{1}{6} \beta^2 b \eta'_{5x}.
\end{eqnarray}

Insertion of (\ref{epop}) into (\ref{3a}) yields
\begin{eqnarray} \label{3b}
\eta'_t + \eta'_x  + \frac{3}{2} \alpha \eta'\eta'_x
 +\frac{1}{6}\beta \eta'_{3x}  &+& \alpha^2 \left(-\frac{3}{8}+\frac{3}{2} a\right)\eta'^2\eta'_x \nonumber
 \\ \!\!&\!\!+\!\!& \!\! 
\alpha\beta \left[\left(\frac{23}{24} +a- 3b \right)\eta'_x\eta'_{2x} +\frac{5}{12}\eta'\eta'_{3x} \right] 
=0.
\end{eqnarray}

Comparison of (\ref{3b}) with (\ref{kdv2'}) shows that only two coefficients are altered, that at the term containing 
 $\alpha^2$,  where $-\frac{3}{8}\rightarrow -\frac{3}{8}+\frac{3}{2}a$ and that with $\alpha\beta \eta'_x\eta'_{2x}$, where $\frac{23}{24}\rightarrow \frac{23}{24}+a-3b$. 

Equation (\ref{3b}) is asymptotically equivalent to (\ref{kdv2'}). NIT gives us some freadom in choosing coefficients $a,b$. They can be chosen such that the most nonlinear term (with 3-rd order nonlinearity) is canceled and the final equations is integrable. The first goal is obtained  if
\begin{equation} \label{aa}
 -\frac{3}{8}+\frac{3}{2} a=0 \qquad \Longrightarrow \qquad a=\frac{1}{4}. 
\end{equation}

Integrability is achieved when coefficient in front of the term with $\eta_{x}\eta_{2x}$ is twice the coefficient in front of the term with $\eta\eta_{3x}$. So, we can choose  $b$ such that
\begin{equation} \label{bb}
\frac{23}{24} +a- 3b = 2\frac{5}{12}  \qquad \Longrightarrow \qquad b= \frac{1}{8}.
\end{equation}
Then, applying to (\ref{kdv2'}) NIT (\ref{nit}) with parameters $a=\frac{1}{4}$ and $b=\frac{1}{8}$ we obtain asymptotically equivalent integrable equation in the form
\begin{equation} \label{kdv2a}
\eta'_{t}   + \frac{3}{2} \alpha\,\eta'\eta'_{x}+ \frac{1}{6}\beta\, \eta'_{3x} +\frac{5}{12}
  \alpha\beta\,\left(2\eta'_{x}\eta'_{2x}+\eta'\eta'_{3x} \right) =0 .  
\end{equation}


We will show that for  (\ref{kdv2a}) there exists Hamiltonian form
\begin{equation} \label{h4}
\eta'_t = \frac{\partial}{\partial x} \left(\frac{\delta \mathcal{H}}{\delta \eta'} \right),
\end{equation}
where Hamiltonian ~$H=\int_{-\infty}^{\infty} \mathcal{H}\,dx$~ has the density
\begin{equation} \label{h5}
\mathcal{H} = -\frac{1}{4}\alpha \eta'^{\,3}+ \frac{1}{12}\beta \eta'^{\,2}_x + \frac{5}{24} \alpha\beta \eta'\eta'^{\,2}_x.
\end{equation}

Since ~$\mathcal{H}=\mathcal{H}(\eta',\eta'_x)$, then functional derivative is given by
\begin{equation} \label{h6}
\frac{\delta \mathcal{H}}{\delta \eta'} = \frac{\partial \mathcal{H}}{\partial \eta'} - \frac{\partial}{\partial x}\frac{\partial \mathcal{H}}{\partial \eta'_x} 
= -\frac{3}{4} \alpha  \eta'^{\,2} -\frac{1}{6} \beta 
   \eta'_{2x} - \frac{5}{24} \alpha  \beta 
   {\eta'_{x}}^{2} - \frac{5}{12} \alpha  \beta \eta' \eta'_{2x}.
\end{equation}
Insertion of (\ref{h6}) into  (\ref{h4}) gives
\begin{equation} \label{h7}
\eta'_t = -\frac{3}{2} \alpha \eta' \eta'_x -\frac{1}{6} \beta \eta'_{3x} - \frac{5}{12} \alpha\beta \left(2 \eta'_x \eta'_{xx} + \eta' \eta'_{xxx}
\right).
\end{equation}
what coincides with (\ref{kdv2a}).

It is worth to notice, that application of inverse NIT to (\ref{kdv2a}) brings back the equation (\ref{kdv2'}) (up to second order in $\alpha,\beta$).

Existence of the Hamiltonian implies that there exist invariats of the equation (\ref{kdv2a}). This is the first step  towards obtaining a global mild solution according to approach due to de Bouard and Debussche \cite{Deb}.

\begin{remark}
Equations (\ref{kdv2a}) or (\ref{h7}), up to numerical coefficients, are the same as left hand side of  stochastic equation (\ref{NL1}). Then study of  stochastic equation  (\ref{NL1}) is justified.
\end{remark}

\section{Proof of Theorem \ref{aux}} \label{proofs3}

In order to make the paper self-contained we recall the following results.

\begin{thm}\label{A1}(\textit{\cite{Deb}, Proposition A.1})
Let $A=L_{\omega}^{q}(L_{t}^{2})$ or $A=L^{q}(\Omega)$, with $1<q<\infty$, and let $u$ be an $A$-valued function of $x\in\mathbb{R}$. Assume that for some $p$, with $1<p<\infty$ and some $\sigma>0$ 
$$	u\in L_{x}^{p} ( A), \quad D^{\sigma}u\in L_{x}^{\infty}(A);$$
then for any $\alpha \in [0,\sigma]$ 
$D^{\alpha}u \in L_{x}^{p_{\alpha}}$, with ~$p_{\alpha}$ defined by ~$\frac{1}{p_{\alpha}} = \frac{1}{p}\left( 1- \frac{\alpha}{\sigma} \right)$.
Furthermore, there is a constant $C$ such that $$
\left| D^{\alpha} \right|_{L_{x}^{p_{\alpha}}(A)} \leq C\left| u \right|_{L_{x}^{p_{\alpha}}(A)}^{1-\frac{\alpha}{\sigma}}\left|D^{\sigma}u\right|_{L_{x}^{\infty}(A)}^{\frac{\alpha}{\sigma}}.$$
\end{thm}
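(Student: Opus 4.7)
This is a fractional Gagliardo--Nirenberg--Sobolev inequality for Banach-space valued functions: it interpolates the derivative order between $0$, where $u$ is controlled in $L^p_x(A)$, and $\sigma$, where $D^\sigma u$ is controlled in $L^\infty_x(A)$. A quick dimensional check confirms that $p_\alpha$ defined by $\frac{1}{p_\alpha}=\frac{1}{p}(1-\frac{\alpha}{\sigma})$ is the unique target exponent compatible with the homogeneity exponents $1-\alpha/\sigma$ and $\alpha/\sigma$ on the right-hand side, so this is the only estimate one could hope to prove.

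\textbf{Plan.} I would reduce to the scalar Hedberg-type proof of Gagliardo--Nirenberg, carried out pointwise in the Banach space $A$. The two admissible spaces $A=L^q(\Omega)$ and $A=L^q_\omega(L^2_t)$ with $1<q<\infty$ are UMD Banach function spaces, so the vector-valued Littlewood--Paley theory, the Bernstein inequalities, and the Hardy--Littlewood maximal theorem all apply to $A$-valued functions of $x\in\mathbb{R}$. With these tools the scalar argument transfers essentially verbatim.

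\textbf{Steps.} First, decompose $u=\sum_{j\in\mathbb{Z}}\Delta_j u$ via a standard Littlewood--Paley partition in the $x$ variable, acting pointwise in $A$. Second, establish a pointwise majorant by splitting the series at an optimal level $j_0=j_0(x)$: the low-frequency piece $\sum_{j\leq j_0}2^{j\alpha}|\Delta_j u(x)|_A$ is dominated by $C\,2^{j_0\alpha}M(|u|_A)(x)$ via Bernstein's inequality and the well-known pointwise bound by the Hardy--Littlewood maximal function $M$, while the high-frequency piece, after rewriting $\Delta_j u = 2^{-j\sigma}\widetilde{\Delta}_j(D^\sigma u)$, is dominated by $C\,2^{j_0(\alpha-\sigma)}|D^\sigma u|_{L^\infty_x(A)}$. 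Optimising $j_0$ to balance the two contributions yields the pointwise inequality
\begin{equation*}
|D^\alpha u(x)|_A \;\leq\; C\,\bigl(M(|u|_A)(x)\bigr)^{1-\alpha/\sigma}\,|D^\sigma u|_{L^\infty_x(A)}^{\alpha/\sigma}.
\end{equation*}
Third, raise both sides to the $p_\alpha$-th power and integrate in $x$; using the key identity $p_\alpha(1-\alpha/\sigma)=p$ together with the scalar maximal theorem $|M(|u|_A)|_{L^p_x}\leq C\,|u|_{L^p_x(A)}$, and then taking the $p_\alpha$-th root, one obtains the claimed estimate with constant depending only on $p$, $\sigma$, $\alpha$.

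\textbf{Main obstacle.} The only non-routine point is justifying the vector-valued Littlewood--Paley and Bernstein machinery for the mixed-norm target $A=L^q_\omega(L^2_t)$; once one regards this as an iterated $L^q$-space with Hilbert-space range, standard UMD-valued harmonic analysis applies, and the remaining optimisation in $j_0$ and the final integration in $x$ are mechanical. If this vector-valued extension is a concern, an alternative is to use Stein's complex interpolation on the analytic family $T_z = D^{\sigma z}$ between the strip boundaries $\mathrm{Re}\,z=0$ (where $D^{i\sigma s}$ is uniformly bounded on $L^p_x(A)$ by a Mihlin-type multiplier theorem) and $\mathrm{Re}\,z=1$ (where the control is the $L^\infty_x(A)$ bound on $D^\sigma u$), which delivers the same estimate.
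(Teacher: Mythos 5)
The paper does not actually prove this statement: it is recalled verbatim from de Bouard and Debussche (Proposition A.1 of \cite{Deb}) precisely so that the authors need not reprove it, so there is no in-paper argument to compare against. Your proposal is a correct and standard route to the result, and it is essentially the same Hedberg-type maximal-function interpolation that underlies the original proof in \cite{Deb}: dyadic decomposition in $x$, pointwise bound of the low frequencies by $M(|u|_A)$ and of the high frequencies by $2^{-j\sigma}\,|D^{\sigma}u|_{L^{\infty}_x(A)}$, optimisation of the splitting level, then integration using $p_{\alpha}(1-\alpha/\sigma)=p$. Two remarks. First, the ``main obstacle'' you flag is not actually an obstacle: your argument never uses square functions or UMD-valued multiplier theory, only the triangle inequality for Bochner integrals (to get $|\phi_j\ast u(x)|_A\le(|\phi_j|\ast|u|_A)(x)\le C\,M(|u|_A)(x)$) and the \emph{scalar} Hardy--Littlewood maximal theorem applied to $x\mapsto|u(x)|_A$; this works for any Banach space $A$, so the discussion of UMD spaces and the alternative complex-interpolation route are unnecessary. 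Second, your derivation lands on $|u|_{L^{p}_x(A)}^{1-\alpha/\sigma}$ on the right-hand side, whereas the statement as printed has $|u|_{L^{p_{\alpha}}_x(A)}^{1-\alpha/\sigma}$; the exponent $p$ is the correct one (it is what the homogeneity count you performed actually forces, and it is what appears in \cite{Deb}), so you have implicitly corrected a typo in the quoted statement rather than deviated from it. Minor loose ends to tidy if this were written out in full: the case $\alpha=0$ is trivial and should be excluded from the geometric-series summation $\sum_{j\le j_0}2^{j\alpha}$, and the homogeneous decomposition $u=\sum_j\Delta_j u$ should be justified modulo polynomials for $u\in L^p_x(A)$, both routine.
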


\begin{thm}\label{L2.1}(\textit{\cite{KPV91}, Lemma 2.1})
Let $v_{0}\in L^{2}(\mathbb{R})$.Then
\begin{equation}\nonumber
\int_{-\infty}^{\infty} \left| D^{\frac{\alpha}{2}}V^{\alpha}(t)v_{0}(x)\right|^{2} \diff t = c_{\alpha}||v_{0}||^{2}_{2} \quad \mbox{for~any} \quad x\in\mathbb{R}.
\end{equation}
\end{thm}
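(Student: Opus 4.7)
The plan is to reduce the space-time integral to a pointwise-in-$x$ Plancherel identity in $t$, after a change of variable on the Fourier side. Writing $V^\alpha(t)$ through its symbol,
\begin{equation}\nonumber
D^{\alpha/2} V^\alpha(t) v_0(x) = \frac{1}{2\pi} \int_{\mathbb{R}} |\xi|^{\alpha/2}\, e^{ix\xi}\, e^{-it\xi|\xi|^\alpha}\, \widehat{v_0}(\xi)\,\diff\xi ,
\end{equation}
the aim is to recast this, for fixed $x$, as an inverse Fourier transform in $t$ of a function square-integrable in the new frequency variable $\eta$.

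First I would substitute $\eta = -\xi|\xi|^\alpha$. For $\alpha > 0$ this map is a strictly decreasing $C^1$-bijection of $\mathbb{R}$ onto itself with $|d\eta/d\xi| = (\alpha+1)|\xi|^\alpha$. Substituting yields
\begin{equation}\nonumber
D^{\alpha/2} V^\alpha(t) v_0(x) = \frac{1}{2\pi(\alpha+1)} \int_{\mathbb{R}} |\xi(\eta)|^{-\alpha/2}\, e^{ix\xi(\eta)}\, \widehat{v_0}(\xi(\eta))\, e^{it\eta}\,\diff\eta ,
\end{equation}
so that as a function of $t$ this is the inverse Fourier transform of $g_x(\eta) := (\alpha+1)^{-1} |\xi(\eta)|^{-\alpha/2}\, e^{ix\xi(\eta)}\, \widehat{v_0}(\xi(\eta))$.

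Next, applying Plancherel's theorem in $t$ (with $x$ frozen), and using crucially that $|e^{ix\xi(\eta)}|=1$ — which is precisely what forces the answer to be independent of $x$ — one obtains
\begin{equation}\nonumber
\int_{\mathbb{R}} |D^{\alpha/2} V^\alpha(t) v_0(x)|^2\,\diff t = \frac{1}{2\pi(\alpha+1)^2} \int_{\mathbb{R}} |\xi(\eta)|^{-\alpha}\, |\widehat{v_0}(\xi(\eta))|^2\,\diff\eta .
\end{equation}
Undoing the substitution with $\diff\eta = (\alpha+1)|\xi|^\alpha\diff\xi$, the weight $|\xi|^{-\alpha}$ is exactly cancelled by the Jacobian, leaving $\frac{1}{2\pi(\alpha+1)}\|\widehat{v_0}\|_{L^2}^2 = \frac{1}{\alpha+1}\|v_0\|_{L^2}^2$, which identifies $c_\alpha = 1/(\alpha+1)$ (up to the chosen Fourier normalization).

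The main obstacle is the degeneracy of the map $\xi \mapsto -\xi|\xi|^\alpha$ at the origin: the factor $|\xi|^{\alpha/2}$ in the symbol and the weight $|\xi|^\alpha$ in the Jacobian make the formal computation delicate near $\xi=0$, and strictly speaking the oscillatory integral representation of $D^{\alpha/2}V^\alpha(t)v_0(x)$ need not converge absolutely for arbitrary $v_0 \in L^2$. I would handle this by a standard density argument: first establish the identity for $v_0$ in the Schwartz class with $\widehat{v_0}$ supported away from the origin, where all integrals converge absolutely and the change of variables is unambiguous; then use that the displayed identity expresses isometry (up to the constant $c_\alpha$) of a densely defined operator from such test functions into $L^2(\mathbb{R}_t)$, and extend by continuity to all $v_0 \in L^2(\mathbb{R})$.
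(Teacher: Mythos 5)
Your proposal is correct, and it is essentially the proof of Lemma 2.1 in Kenig--Ponce--Vega \cite{KPV91}, which the paper merely cites without reproducing: the change of variables $\eta=-\xi|\xi|^{\alpha}$ turns the expression (for fixed $x$) into an inverse Fourier transform in $t$, Plancherel gives the $L^2_t$ norm, the Jacobian $(\alpha+1)|\xi|^{\alpha}$ exactly cancels the weight $|\xi|^{-\alpha}$ yielding $c_{\alpha}=(\alpha+1)^{-1}$, and the density/continuity argument handles general $v_{0}\in L^{2}(\mathbb{R})$. Nothing further is needed.
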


\begin{thm}\label{T2.4}(\textit{\cite{KPV91}, Theorem 2.4})
For any $(\theta,\beta)\in[0,1]\times\left[0,\frac{\alpha-1}{2}\right]$ 
\begin{equation}\label{T24a}
\left(\int_{-\infty}^{\infty} \left\|D^{\theta\frac{\beta}{2}}U^{\alpha}(t)v_{0}\right\|^{q}_{p}\diff t\right)^{\frac{1}{q}} \leq c\left\|v_{0}\right\|_{2}
\end{equation}
and
\begin{equation}\label{T24b}
\left(\int_{-\infty}^{\infty} \left\|\int D^{\theta\frac{\beta}{2}}U^{\alpha}(t-s)f(\cdot,s)\diff s\right\|^{q}_{p}\diff t\right)^{\frac{1}{q}} \leq c\left(\int_{-\infty}^{\infty}\left\|f(\cdot,s)\right\|_{p'}^{q'} \diff t\right)^{\frac{1}{q'}},
\end{equation}
where $(q,p) = \left(2(\alpha+1)/(\theta(\beta+1)),2/(1-\theta)\right)$, $\frac{1}{p}+\frac{1}{p'} = \frac{1}{q}+\frac{1}{q'} = 1$.
\end{thm}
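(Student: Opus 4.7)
The plan is to prove Theorem \ref{T2.4} by a $TT^{*}$ and interpolation scheme anchored at a few endpoint cases. The admissible parameters $(\theta,\beta)\in[0,1]\times[0,(\alpha-1)/2]$ form a rectangle, so it suffices to establish the estimates at its corners and then fill in the interior by interpolation.

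First I would establish the homogeneous estimate (\ref{T24a}) at three corners. At $\theta=0$ the pair $(q,p)=(\infty,2)$ and the estimate reduces to $\|U^{\alpha}(t)v_{0}\|_{L^{\infty}_{t}L^{2}_{x}}\le\|v_{0}\|_{2}$, immediate from unitarity of $U^{\alpha}(t)$ on $L^{2}(\mathbb{R})$. At $\theta=1,\beta=0$ the pair is $(q,p)=(2(\alpha+1),\infty)$, the classical Strichartz endpoint with no derivative loss; I would derive it from the dispersive bound
$$\|U^{\alpha}(t)v_{0}\|_{L^{\infty}_{x}}\le c\,|t|^{-1/\alpha}\|v_{0}\|_{L^{1}_{x}},$$
obtained by stationary-phase analysis of the oscillatory kernel whose phase is $x\xi+t\xi|\xi|^{\alpha-1}$; interpolating against $L^{2}$-conservation produces $\|U^{\alpha}(t)v_{0}\|_{L^{p}_{x}}\le c\,|t|^{-(1-2/p)/\alpha}\|v_{0}\|_{L^{p'}_{x}}$, and a $TT^{*}$ step combined with Hardy-Littlewood-Sobolev in time yields the Strichartz endpoint. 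At $\theta=1,\beta=(\alpha-1)/2$ the pair is $(q,p)=(4,\infty)$ with derivative loss $(\alpha-1)/4$; this endpoint follows from the sharp Kato-type local smoothing
$$\|D^{\alpha/2}U^{\alpha}(t)v_{0}\|_{L^{\infty}_{x}L^{2}_{t}}\le c\|v_{0}\|_{2},$$
which is Theorem \ref{L2.1} read uniformly in $x$, together with a $TT^{*}$ step whose kernel is $D^{\alpha}U^{\alpha}(t-s)$; combining the smoothing estimate with the dispersive bound and Sobolev embedding in $x$ produces the claimed bound.

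Next, Stein's complex interpolation theorem applied to the analytic family $z\mapsto D^{z\beta/2}U^{\alpha}(t)$ between the $\theta=0$ corner and the two $\theta=1$ corners, followed by bilinear interpolation in $\beta$, fills out the rectangle and gives (\ref{T24a}) in full generality. The inhomogeneous estimate (\ref{T24b}) is then obtained by factoring the convolution operator $Sf(x,t):=\int D^{\theta\beta/2}U^{\alpha}(t-s)f(\cdot,s)\diff s$ through a pair of half-derivative Strichartz operators $Av_{0}(t):=D^{\theta\beta/4}U^{\alpha}(t)v_{0}$; since $S=AA^{*}$, $TT^{*}$-duality reduces its boundedness $L^{q'}_{t}L^{p'}_{x}\to L^{q}_{t}L^{p}_{x}$ to the (already established) homogeneous Strichartz bound for $A$ at the shifted parameters. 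Equivalently, one can run the Hardy-Littlewood-Sobolev argument in $t$ directly on the pointwise kernel bound for $D^{\theta\beta/2}U^{\alpha}(t-s)$ inside the same interpolation scheme.

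The principal obstacle is the sharp dispersive estimate for $U^{\alpha}(t)$ at non-integer $\alpha$: it rests on a careful van der Corput / stationary-phase analysis of the oscillatory integral $\int_{\mathbb{R}}e^{i(x\xi+t\xi|\xi|^{\alpha-1})}\diff\xi$ with no logarithmic loss in $|t|$, since any such loss would shrink the admissible rectangle below the stated range. Separating contributions from low and high frequencies, and treating the stationary point $|\xi|\sim(|x|/|t|)^{1/(\alpha-1)}$ with uniform control in $x$ and $t$, is the heart of the analytic argument and the step that carries the bulk of the technical difficulty.
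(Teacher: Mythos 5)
The paper does not prove this statement at all: it is imported verbatim from \cite{KPV91} (Theorem 2.4 there) as background material, and the paper never even defines the group $U^{\alpha}(t)$. So there is no in-paper argument to compare with, and your overall strategy --- pointwise decay of the oscillatory kernel, a $TT^{*}$ step with Hardy--Littlewood--Sobolev in time, Stein interpolation in the derivative parameter, and duality for the inhomogeneous estimate (\ref{T24b}) --- is indeed the strategy of the cited source. The $\theta=0$ corner via unitarity and the reduction of (\ref{T24b}) to (\ref{T24a}) by $TT^{*}$-duality (legitimate here because the $s$-integral is over all of $\mathbb{R}$, not retarded) are fine.

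There are, however, two concrete problems. First, your normalization is inconsistent with the stated admissible pair. You take the phase to be $x\xi+t\xi|\xi|^{\alpha-1}$ with dispersive decay $|t|^{-1/\alpha}$; then $TT^{*}$ plus Hardy--Littlewood--Sobolev at $\theta=1$, $\beta=0$, $p=\infty$ forces $2/q=1/\alpha$, i.e.\ $q=2\alpha$, not the claimed $q=2(\alpha+1)$, and more globally the estimate with $q=2(\alpha+1)/(\theta(\beta+1))$ and gain $D^{\theta\beta/2}$ fails a scaling test under your reading. The exponents in the statement are only consistent if $U^{\alpha}(t)$ has symbol $e^{it\xi|\xi|^{\alpha}}$, so that the decay is $|t|^{-1/(\alpha+1)}$ and KdV corresponds to $\alpha=2$ --- which is exactly how the paper applies the theorem in the proof of Lemma \ref{P3.4} (substituting $\alpha=2$, $\beta=\tfrac12$ to get $D^{1/4}$ in $L^{4}_{t}L^{\infty}_{x}$) and how Theorem \ref{L2.1} with the gain $D^{\alpha/2}$ should be read. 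Second, your treatment of the corner $\theta=1$, $\beta=(\alpha-1)/2$ does not work as described: the local smoothing bound lives in $L^{\infty}_{x}L^{2}_{t}$, which is \emph{weaker} than $L^{2}_{t}L^{\infty}_{x}$ by Minkowski, so no Sobolev embedding in $x$ can convert it into an $L^{4}_{t}L^{\infty}_{x}$ bound. The ingredient actually needed is the uniform-in-$x$ weighted oscillatory-integral estimate
\begin{equation*}
\Bigl|\int_{\mathbb{R}}e^{i(t\xi|\xi|^{\alpha}+x\xi)}|\xi|^{\beta}\,\diff\xi\Bigr|\le c\,|t|^{-(\beta+1)/(\alpha+1)},\qquad 0\le\beta\le\tfrac{\alpha-1}{2},
\end{equation*}
which is the genuinely hard van der Corput step (this is where the upper bound on $\beta$ enters, via $2/q=(\beta+1)/(\alpha+1)\le\tfrac12$ so that HLS applies); once you have it, the whole $\theta=1$ edge follows at once from $TT^{*}$ and no separate interpolation in $\beta$ is needed there. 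With these two repairs your outline becomes essentially the proof in \cite{KPV91}.
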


\begin{lemma}\label{P3.3}
Asume that $\widetilde{\sigma}>\sigma>\frac{3}{4}$ and $0<\varepsilon<\inf\left\{\widetilde{\sigma},2\right\}$. Then 
\begin{equation}\nonumber
D^{\widetilde{\sigma}-\varepsilon}\partial_{x}\left(\frac{\partial^{2}}{\partial x^{2}}W_{V} \right)\in L^{2}\left(\Omega; L_{x}^{\infty}(L_{t}^{2})\right).
\end{equation}

\end{lemma}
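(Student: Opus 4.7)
\textbf{Proof plan for Lemma \ref{P3.3}.} The strategy is to pass from the stochastic convolution to a deterministic local smoothing bound for the Airy group $V(t)$ using the It\^o isometry, and then to apply Theorem \ref{L2.1} with $\alpha=3$. Fix an orthonormal basis $\{e_k\}_{k\in\mathbb N}$ of $L^2(\mathbb R)$ and independent standard Brownian motions $\{\beta_k\}$, so that $W_V(t,x)=\sum_k\int_0^t V(t-s)\Phi e_k(x)\,d\beta_k(s)$. Since $\partial_x(\partial_{xx}^2 W_V)=\partial_x^3 W_V$ and $V(t)$ commutes with every Fourier multiplier, the It\^o isometry applied pointwise in $(t,x)$ gives
$$E\,\bigl|D^{\widetilde\sigma-\varepsilon}\partial_x^3 W_V(t,x)\bigr|^2=\sum_k\int_0^t\bigl|D^{\widetilde\sigma-\varepsilon}\partial_x^3 V(t-s)\Phi e_k(x)\bigr|^2 ds.$$

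Next I would integrate over $t\in[0,T]$, use Fubini, and enlarge the inner interval to all of $\mathbb R$:
$$E\int_0^T\bigl|D^{\widetilde\sigma-\varepsilon}\partial_x^3 W_V(t,x)\bigr|^2\,dt\le T\sum_k\int_{-\infty}^\infty\bigl|D^{\widetilde\sigma-\varepsilon}\partial_x^3 V(r)\Phi e_k(x)\bigr|^2 dr.$$
Now $\partial_x^3$ coincides with $D^3$ up to composition with the Hilbert transform $H$, which is an $L^2$-isometry and commutes with both $V(r)$ and fractional derivatives. Consequently
$$\bigl|D^{\widetilde\sigma-\varepsilon}\partial_x^3 V(r)\phi(x)\bigr|=\bigl|D^{3/2}V(r)\bigl(D^{\widetilde\sigma-\varepsilon+3/2}H\phi\bigr)(x)\bigr|.$$
Invoking Theorem \ref{L2.1} with $\alpha=3$ (so that $\int|D^{3/2}V(r)\psi(x)|^2 dr=c\|\psi\|_{L^2}^2$ uniformly in $x$) and summing in $k$, the bound above becomes
$$cT\sum_k\bigl\|D^{\widetilde\sigma-\varepsilon+3/2}\Phi e_k\bigr\|_{L^2}^2=cT\,\|\Phi\|^2_{L_2^0(L^2;H^{\widetilde\sigma-\varepsilon+3/2})}.$$
Because this bound is uniform in $x$, taking a supremum produces $E\|D^{\widetilde\sigma-\varepsilon}\partial_x^3 W_V\|^2_{L_x^\infty L_t^2}<\infty$, which is exactly the claim.

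The main obstacle is the bookkeeping on the Sobolev exponent: the Kato smoothing for the Airy group recovers only $3/2$ derivatives, so the initial datum gets charged $\widetilde\sigma-\varepsilon+3/2$ derivatives. The restriction $0<\varepsilon<\inf\{\widetilde\sigma,2\}$ in the hypothesis, together with $\widetilde\sigma$ slightly larger than $\sigma>\tfrac34$, is chosen precisely so that $\widetilde\sigma-\varepsilon+3/2$ lies within the Sobolev range in which the Hilbert--Schmidt norm of $\Phi$ is finite under the standing hypothesis of Theorem \ref{aux}; a small amount of care is also needed in enlarging the $r$-integral from $[0,T-s]$ to $\mathbb R$ so that Theorem \ref{L2.1} (which is stated on the full line) is applicable without loss.
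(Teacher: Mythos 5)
Your opening steps (the It\^o isometry pointwise in $(t,x)$, Fubini, enlarging the time integral to all of $\mathbb{R}$, and writing $\partial_x^3$ as $D^3$ composed with the Hilbert transform) do match the start of the paper's argument. But the proposal has a genuine gap at its final step: you interchange the supremum in $x$ with the expectation. What your computation controls is $\sup_{x}\mathbb{E}\int_0^T\bigl|D^{\widetilde{\sigma}-\varepsilon}\partial_x^3W_V(t,x)\bigr|^2\,dt$, whereas the lemma asserts $\mathbb{E}\,\sup_{x}\int_0^T\bigl|\cdots\bigr|^2\,dt<\infty$; the fact that the bound is "uniform in $x$" does not allow you to pull $\sup_x$ inside $\mathbb{E}$. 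This is precisely the difficulty the parameter $\varepsilon$ exists to handle, and it is where the paper spends all of its effort: it works with a high moment $q=6/\varepsilon$ in $\omega$, proves an $L_x^{\infty}(L_\omega^{q}(L_t^{2}))$ bound at regularity $3+\widetilde{\sigma}$ (via the smoothing estimate) together with an $L_x^{2}(L_\omega^{q}(L_t^{2}))$ bound at regularity $\widetilde{\sigma}$ (from Proposition 3.3 of de Bouard and Debussche), interpolates between the two using Theorem \ref{A1} with $\alpha=3-\varepsilon$ (so that $p_\alpha=6/\varepsilon=q$) to land in $L_x^{q}(L_\omega^{q}(L_t^{2}))$ at regularity $3+\widetilde{\sigma}-\varepsilon$, and only then uses the Sobolev embedding $W_x^{\varepsilon,q}\hookrightarrow L_x^{\infty}$ (valid since $q\varepsilon>1$) together with $L_\omega^{q}(L_x^{q})=L_x^{q}(L_\omega^{q})$ to move the $L_x^{\infty}$ norm inside the $\omega$-integral. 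Your proposal has no substitute for this interpolation-plus-embedding mechanism, and the role you assign to $\varepsilon$ (merely lowering the derivative count, which would make the statement easier as $\varepsilon$ grows) is the opposite of the role it actually plays.

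A second, smaller error: the smoothing identity you invoke is not available for the Airy group. Theorem \ref{L2.1} with $\alpha=3$ refers to the group $V^{3}$ of a different, higher-order dispersive equation, not to the Airy group $V=V^{2}$. The sharp Kato smoothing for $V$ gains exactly one derivative, $\int_{\mathbb{R}}\bigl|D\,V(r)\psi(x)\bigr|^2\,dr=c\|\psi\|_{L^2}^2$, so a gain of $D^{3/2}$ in $L_x^{\infty}(L_t^{2})$ is false (it would produce $c\|D^{1/2}\psi\|_{L^2}^2$ on the right, not $c\|\psi\|_{L^2}^2$). Hence the charge on $\Phi e_k$ cannot be $\widetilde{\sigma}-\varepsilon+\tfrac32$ derivatives; with the sharp gain it would be at least $\widetilde{\sigma}-\varepsilon+2$, and the paper, using a sub-sharp gain of $D^{1/2}$, ends up requiring $\Phi\in L_2^{0,\widetilde{\sigma}+\frac52}$.
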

\begin{proof}
Let, as usually,  $W_{V} := \int_{0}^{t} V(t-s)\Phi\diff W(s)$ and let $q=\frac{6}{\varepsilon}$.
Estimate the expression $\left|D^{3+\widetilde{\sigma}}W_{V}\right|_{L_{x}^{\infty}(L_{\omega}^{q}(L_{t}^{2}))}$. We have
\begin{equation}\label{P33E1}
\begin{aligned}
\left|D^{3+\widetilde{\sigma}}W_{V}\right|_{L_{x}^{\infty}(L_{\omega}^{q}(L_{t}^{2}))} =& \sup_{x\in\mathbb{R}}\mathbb{E}\left(\left( \int_{0}^{T}\left|\int_{0}^{t} D^{3+\widetilde{\sigma}} V(t-s) \Phi \diff W(s) \right| ^{2} \diff t \right) ^{\frac{q}{2}} \right) \leq \\
\leq& C \sup_{x\in\mathbb{R}} \int_{0}^{T} \mathbb{E} \left( \left| \int_{0}^{t} D^{3+\widetilde{\sigma}} V(t-s) \Phi \diff W(s) \right| ^{2} \right)^{\frac{q}{2}} \diff t \leq \\
\leq& C \sup_{x\in\mathbb{R}} \int_{0}^{T}  \left(   \int_{0}^{t} \sum_{i\in\mathbb{N}} \left| D^{3+\widetilde{\sigma}} V(t-s) \Phi e_{i}(s) \right| ^{2} \diff s \right)^{\frac{q}{2}} \diff t \leq \\
\leq& C  \int_{0}^{T}   \left( \sum_{i\in\mathbb{N}} \sup_{x\in\mathbb{R}} \int_{0}^{t}  \left| D^{3+\widetilde{\sigma}} V(t-s) \Phi e_{i}(s) \right| ^{2} \diff s \right)^{\frac{q}{2}} \diff t.
\end{aligned}
\end{equation}
 
Let us substitute in Theorem \ref{L2.1} $v_{0} = D^{\widehat{\sigma}+\frac{5}{2}}\Phi e_{i}$~ ~and ~$\alpha = 1$. Then we obtain
\begin{equation}\nonumber
\begin{aligned}
C\left|D^{\widehat{\sigma}+\frac{5}{2}}\Phi e_{i}\right|_{L^{2}}^{2} =& \int_{-\infty}^{\infty}|D^{\frac{1}{2}}V(s)D^{\widehat{\sigma}+\frac{5}{2}}\Phi e_{i}|^{2} \diff s = \int_{-\infty}^{\infty}|D^{3+\widehat{\sigma}}V(s)\Phi e_{i}|^{2} \diff s \geq \int_{0}^{t}|D^{3+\widehat{\sigma}}V(t-s)\Phi e_{i}|^{2} \diff s.
\end{aligned}
\end{equation}

Since Theorem \ref{L2.1} holds for all $x\in\mathbb{R}$ and $\left|D^{\widehat{\sigma}+\frac{5}{2}}\Phi e_{i}\right|_{L^{2}} \leq \left|\Phi e_{i}\right|_{H^{\widehat{\sigma}+\frac{5}{2}}}$, then
\begin{equation}\label{P33E2}
\sup_{x\in\mathbb{R}} \int_{0}^{t}|D^{3+\widehat{\sigma}}V(t-s)\Phi e_{i}|^{2} \diff s \leq C\left|\Phi e_{i}\right|_{H^{\widehat{\sigma}+\frac{5}{2}}}^{2}.
\end{equation}

Insertion of (\ref{P33E2}) into (\ref{P33E1}), gives
\begin{equation}\label{P33E3}
\left|D^{3+\widetilde{\sigma}}W_{V}\right|^{q}_{L_{x}^{\infty}(L_{\omega}^{q}(L_{t}^{2}))} \leq C\int_{0}^{T}\left(\sum_{i\in\mathbb{N}} \left|\Phi e_{i}\right|_{H^{\widetilde{\sigma}+\frac{5}{2}}}^{2}\right)^{\frac{q}{2}} \diff t \leq C(T)\left| \Phi\right|_{2}^{0,\widehat{\sigma}+\frac{5}{2}} \leq C(T)|\Phi|^{q}_{L_{2}^{0,\widetilde{\sigma}+\frac{5}{2}}}.
\end{equation}

Let us estimate  $\left|D^{\widetilde{\sigma}}W_{V}\right|_{L_{x}^{2}(L_{\omega}^{q}(L_{t}^{2}))}^{2}$. Basing on proof of Proposition 3.3 in de Bouard and Debussche \cite{Deb} we  have that
\begin{equation}\label{P33E4}
\begin{aligned}
\left|D^{\widetilde{\sigma}}W_{V}\right|_{L_{x}^{2}(L_{\omega}^{q}(L_{t}^{2}))}^{2} \leq C|\Phi|_{L_{2}^{0,\widetilde{\sigma}}}^{2} \leq C|\Phi|^{q}_{L_{2}^{0,\widetilde{\sigma}+\frac{5}{2}}}.
\end{aligned}
\end{equation}

Now, set in Theorem  \ref{A1} $A=L_{\omega}^{q}(L_{t}^{2})$, $p=2$, $u=D^{\widetilde{\sigma}}W_{V}$ $\sigma = 3$ and $\alpha = 3 - \varepsilon$ for some $3 > \varepsilon >0$. Then $D^{3 - \varepsilon} D^{\widetilde{\sigma}} W_{V} = D^{3+\widetilde{\sigma}- \varepsilon}W_{V} \in L_{x}^{p_{\alpha}}(L_{\omega}^{q}(L_{t}^{2}))$ and there exists a constant $C$, such that
\begin{equation}
\begin{aligned}
\left|D^{3+\widetilde{\sigma} - \varepsilon} W_{V}\right|_{L_{x}^{p_{\alpha}}(L_{\omega}^{q}(L_{t}^{2}))} &\leq C \left|D^{\widetilde{\sigma}}W_{V}\right|_{L^{2}(L_{\omega}^{q}(L_{t}^{2}))}^{1-\frac{3-\varepsilon}{3}}\left|D^{3+\widetilde{\sigma}}W_{V}\right|_{L_{x}^{\infty}(L_{\omega}^{q}(L_{t}^{2}))}^{\frac{3-\varepsilon}{3}} = \\
 &= C \left|D^{\widetilde{\sigma}}W_{V}\right|_{L^{2}(L_{\omega}^{q}(L_{t}^{2}))}^{\frac{\varepsilon}{3}}\left|D^{3+\widetilde{\sigma}}W_{V}\right|_{L_{x}^{\infty}(L_{\omega}^{q}(L_{t}^{2}))}^{1-\frac{\varepsilon}{3}},
\end{aligned}
\end{equation}
where 
\begin{equation}\nonumber
p_{\alpha} = \left(\frac{1}{2}\left(1-\frac{3 - \varepsilon}{3}\right)\right)^{-1} = \left(\frac{- \frac{\varepsilon}{3}}{2}\right)^{-1} = \frac{6}{\varepsilon} = q.
\end{equation}

Then we have
\begin{equation}\label{P33E5}
\begin{aligned}
\left|D^{3+\widetilde{\sigma} - \varepsilon} W_{V}\right|_{L_{x}^{q}(L_{\omega}^{q}(L_{t}^{2}))} &\leq C \left|D^{\widetilde{\sigma}}W_{V}\right|_{L^{2}(L_{\omega}^{q}(L_{t}^{2}))}^{\frac{2}{q}}\left|D^{3+\widetilde{\sigma}}W_{V}\right|_{L_{x}^{\infty}(L_{\omega}^{q}(L_{t}^{2}))}^{1-\frac{2}{q}} \leq C\left|\Phi\right|_{L_{2}^{0,\widetilde{\sigma}+\frac{5}{2}}}.
\end{aligned}
\end{equation}
Moreover, basing on the proof of Proposition 3.3 in de Bouard and Debussche  \cite{Deb}, 
\begin{equation}\label{P33E6}
\left| W_{V} \right|_{L_{\omega}^{q}(L_{x}^{q}(L_{t}^{2}))} \leq C\left|\Phi\right|_{L_{2}^{0,\widetilde{\sigma}}} \leq C\left|\Phi\right|_{L_{2}^{0,\widetilde{\sigma}+\frac{5}{2}}}.
\end{equation}

Since $\left|D^{3+\widetilde{\sigma} - \varepsilon} W_{V}\right|_{L_{\omega}^{q}(L_{x}^{q}(L_{t}^{2}))} = \left|D^{3+\widetilde{\sigma} - \varepsilon} W_{V}\right|_{L_{x}^{q}(L_{\omega}^{q}(L_{t}^{2}))}$, then from (\ref{P33E5}) oraz (\ref{P33E6})  we obtain
\begin{equation}\label{P33E7}
\left|W_{V}\right|_{L_{\omega}^{q}(W_{x}^{3+\widetilde{\sigma}-\varepsilon,q}(L_{t}^{2}))}\leq C\left|\Phi\right|_{L_{2}^{0,\widetilde{\sigma}+\frac{5}{2}}}.
\end{equation}
Because $q\,\varepsilon>1$, then $W^{\varepsilon,q}_{x}(L_{t}^{2})\subset L_{x}^{\infty}(L_{t}^{2})$, therefore $D^{3+\widetilde{\sigma} - \varepsilon} W_{V} \in L_{\omega}^{q}(L_{x}^{\infty}(T_{t}^{2}))$. Moreover 
\begin{equation}\nonumber
\left|D^{3+\widetilde{\sigma} - \varepsilon} W_{V}\right|_{L_{\omega}^{q}(L_{x}^{\infty}(L_{t}^{2}))} \leq C\left|\Phi\right|_{L_{2}^{0,\widetilde{\sigma}+\frac{5}{2}}}.
\end{equation}

Finally 
\begin{eqnarray*}
D^{\widetilde{\sigma}-\varepsilon}\partial_{x}\left(\frac{\partial^{2}}{\partial x^{2}}W_V\right) &=& D^{\widetilde{\sigma}-\varepsilon}\partial_{3x}W_V = \int_{0}^{t}D^{\widetilde{\sigma}-\varepsilon}\partial_{3x}V(t-s)\Phi\diff W(s)\\ &=&  \int_{0}^{t}D^{3+\widetilde{\sigma}-\varepsilon}V(t-s)\mathcal{H}\Phi\diff W(s),
\end{eqnarray*}
where $\mathcal{H}$ is the Hilbert transform, what finishes the proof.
\end{proof}

\begin{lemma}\label{P3.4}
\begin{equation}\nonumber
\partial_{x}\left(\frac{\partial^{2}}{\partial x^{2}}W_{V}\right) \in L^{2}\left(\Omega;L_{t}^{4}\left(L_{x}^{\infty}\right)\right).
\end{equation}
\end{lemma}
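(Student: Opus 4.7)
The plan is to follow the proof of Proposition 3.4 in de Bouard--Debussche \cite{Deb}, adapted to our stochastic convolution carrying three $x$-derivatives rather than one. Since $V(t)$ commutes with spatial derivatives (both being Fourier multipliers), we may rewrite
\begin{equation}\nonumber
\partial_x\!\left(\frac{\partial^{2}}{\partial x^{2}}W_{V}\right)\!(t) \;=\; \int_0^t \partial_x V(t-s)\,\tilde\Phi\,\diff W(s), \qquad \tilde\Phi:=\partial_{2x}\circ\Phi,
\end{equation}
and so the claim reduces to $\partial_x\tilde W_V\in L^2(\Omega;L^4_t L^\infty_x)$, where $\tilde W_V(t)=\int_0^t V(t-s)\tilde\Phi\,\diff W(s)$. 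The regularity carried through Section \ref{proofs3} (namely $\Phi\in L_2^{0,\widetilde\sigma+5/2}$ with $\widetilde\sigma>3/4$) implies $\tilde\Phi\in L_2^{0,\widetilde\sigma+1/2}$, so the functional framework of \cite{Deb} still applies with $\tilde\Phi$ in place of $\Phi$.

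The core step is the Da Prato--Kwapie\'n--Zabczyk factorization: for $\gamma\in(0,\tfrac12)$,
\begin{equation}\nonumber
\partial_x\tilde W_V(t)\;=\;c_\gamma\int_0^t (t-\sigma)^{\gamma-1}\,\partial_x V(t-\sigma)\,Y_\gamma(\sigma)\,\diff\sigma, \qquad Y_\gamma(\sigma):=\int_0^\sigma (\sigma-s)^{-\gamma}V(\sigma-s)\tilde\Phi\,\diff W(s).
\end{equation}
By the It\^o isometry together with unitarity of $V(t)$ on $H^s$,
\begin{equation}\nonumber
\mathbb{E}\,|Y_\gamma(\sigma)|^{2}_{H^s}\;\le\;\frac{\sigma^{1-2\gamma}}{1-2\gamma}\,\|\tilde\Phi\|^{2}_{L_2^{0,s}},
\end{equation}
and Gaussian hypercontractivity upgrades this to $Y_\gamma\in L^q(\Omega;L^q([0,T];H^s))$ for any $q\in[2,\infty)$ with $q\gamma<1$. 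I would then invoke Theorem \ref{T2.4} with $\theta=1$ and $\beta=1/2$ (and $\alpha=2$ for the KdV group) to obtain the Strichartz estimate $\|D^{1/4}V(t)v_0\|_{L^4_t L^\infty_x}\le c\|v_0\|_{L^2}$, together with its inhomogeneous counterpart (\ref{T24b}); combined with Young's convolution inequality in the time variable against the kernel $(t-\sigma)^{\gamma-1}$, this bounds $\|\partial_x\tilde W_V\|_{L^4_t L^\infty_x}$ deterministically by a suitable $L^q_\sigma H^s$-norm of $Y_\gamma$, and taking expectations closes the argument.

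The main obstacle is the $L^\infty_x$ endpoint: since $L^\infty$ is not UMD, a direct Burkholder--Davis--Gundy inequality is unavailable, which is why the factorization method is essential -- it converts the stochastic convolution into a deterministic convolution applied to a smoothed stochastic input $Y_\gamma$. A secondary bookkeeping issue is matching the $D^{1/4}$-smoothing gained from the Kenig--Ponce--Vega estimate against the three $x$-derivatives loaded onto $\Phi$; the passage from $\partial_{3x}$ to $D^{3}$ uses the Hilbert transform exactly as in the last display of the proof of Lemma \ref{P3.3}, and the hypothesis $\Phi\in L_2^{0,\widetilde\sigma+5/2}$ is tuned precisely to absorb these losses while leaving enough regularity above the threshold $3/4$ to feed into the Strichartz bound.
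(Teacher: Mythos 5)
Your plan is viable, but it follows a genuinely different route from the paper's. The paper reproduces the scheme of Proposition 3.4 in de Bouard--Debussche \cite{Deb}: it puts the probability norm \emph{innermost}, setting $\varepsilon=\widetilde\sigma-\tfrac34$ and $q=4+\tfrac{12}{\varepsilon}$, estimates $\bigl|D^{3+\varepsilon}W_V\bigr|_{L^4_t(L^\infty_x(L^q_\omega))}$ by the Gaussian moment equivalence at each fixed $(t,x)$ together with the Kenig--Ponce--Vega bound of Theorem \ref{T2.4} (with $\alpha=2$, $\theta=1$, $\beta=\tfrac12$) applied to each $\Phi e_i$, pairs this with the low bound $|W_V|_{L^4_t(L^2_x(L^q_\omega))}\le C|\Phi|_{L_2^{0,0}}$, interpolates via Theorem \ref{A1} to get $D^{3+\varepsilon/2}W_V$ in $L^4_t(L^q_x(L^q_\omega))$, and concludes with the embedding $W^{\varepsilon/2,q}_x\hookrightarrow L^\infty_x$ and Minkowski to pull the $\omega$-norm outside. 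You instead decouple the stochastic integration from the non-UMD endpoint by the factorization identity, so the only stochastic input is an It\^o-isometry/hypercontractivity bound on $Y_\gamma$ in a Hilbert space, and $L^\infty_x$ is reached purely deterministically through the homogeneous Strichartz estimate. Your route avoids Theorem \ref{A1} and the $\varepsilon$-loss bookkeeping and is more robust; the paper's route avoids the singular-in-time kernel and can import all exponents directly from \cite{Deb}.

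One step of your sketch needs more care than an appeal to ``Young's convolution inequality'': the integral $\int_0^t(t-\sigma)^{\gamma-1}\partial_xV(t-\sigma)Y_\gamma(\sigma)\,\diff\sigma$ is not a convolution of two functions of $t$, because the $D^{1/4}$ gain from (\ref{T24a}) is available only as an $L^4$-average in $t$ for each fixed $\sigma$. The correct closing move is H\"older in $\sigma$ against $(t-\sigma)^{(\gamma-1)r'}$ followed by Minkowski's integral inequality in $L^{4/r}_t$, which forces $\tfrac1\gamma<r\le4$, hence $\gamma\in\bigl(\tfrac14,\tfrac12\bigr)$ rather than the full range $\bigl(0,\tfrac12\bigr)$ you allow. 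With $r=4$ this yields $\|\partial_x\tilde W_V\|_{L^4_tL^\infty_x}\le C_T\bigl(\int_0^T\|Y_\gamma(\sigma)\|^4_{H^{3/4}}\,\diff\sigma\bigr)^{1/4}$, and taking second moments closes the argument; note also that only the homogeneous estimate (\ref{T24a}) is used here, not (\ref{T24b}).
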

\begin{proof}
Let $\varepsilon = \widetilde{\sigma} - \frac{3}{4}$ and $q=4+\frac{12}{\varepsilon}$.
Estimate  ~$\left| D^{3+\varepsilon} W_{V}\right|_{L_{t}^{4}(L_{x}^{\infty}(L_{\omega}^{q}))}$. 

We have
\begin{equation}\nonumber
\begin{aligned}
\left| D^{3+\varepsilon} W_{V}\right|^{4}_{L_{t}^{4}(L_{x}^{\infty}(L_{\omega}^{q}))} =& \int_{0}^{T}\sup_{x\in\mathbb{R}}\mathbb{E}\left(\left|\int_{0}^{t}D^{\widetilde{\sigma}+\frac{9}{4}}V(t-s)\Phi\diff W(s)\right|^{q}\right)^{\frac{4}{q}} \diff t \leq \\
\leq& C \int_{0}^{T}\sup_{x\in\mathbb{R}}\left(\sum_{i\in\mathbb{N}}\int_{0}^{t}\left|D^{\widetilde{\sigma}+\frac{9}{4}}V(t-s)\Phi e_{i}\right|^{2}\diff s\right)^{2}\diff t \leq \\
\leq& C(T) \left( \sum_{i\in\mathbb{N}}\left( \int_{0}^{T} \sup_{x\in\mathbb{R}} \left|D^{\widetilde{\sigma}+\frac{9}{4}}V(t-s)\Phi e_{i}\right|^{4} \diff s\right)^{\frac{1}{2}}\right)^{2}.
\end{aligned}
\end{equation}
Substitute in Theorem  \ref{T2.4} $\alpha=2$, $\theta=1$, $\beta=\frac{1}{2}$ (like in de Bouard and Debussche \cite{Deb}). The result is
\begin{equation}\nonumber
\int_{0}^{T}\sup_{x\in\mathbb{R}}\left|D^{\widetilde{\sigma} + \frac{9}{4}}V(t)\Phi e_{i}\right|^{4}\diff t \leq C\left|D^{\widetilde{\sigma}+2}\Phi e_{i}\right|^{4}_{L_{x}^{2}}\leq C\left|\Phi\right|_{L_{2}^{0,\widetilde{\sigma}+2}},
\end{equation}
where ~$L_{2}^{0,\widetilde{\sigma}+2} = L_2^0 (L^2(\mathbb{R});H^{\widetilde{\sigma}+2}(\mathbb{R}) )$.
This implies 
\begin{equation}\nonumber
\left| D^{3+\varepsilon} W_{V}\right|^{4}_{L_{t}^{4}(L_{x}^{\infty}(L_{\omega}^{q}))} \leq C\left|\Phi\right|_{L_{2}^{0,\widetilde{\sigma}+2}},
\end{equation}
Moreover from the proof of Proposition 3.4 in de Bouard and Debussche  \cite{Deb} we know that
\begin{equation}\nonumber
\left|W_{V}\right|_{L_{t}^{4}(L_{x}^{2}(L_{\omega}^{q}))} \leq C\left|\Phi\right|_{L_{2}^{0,0}} \leq C\left|\Phi\right|_{L_{2}^{0,\widetilde{\sigma}}} \leq C\left|\Phi\right|_{L_{2}^{0,\widetilde{\sigma}+2}}.
\end{equation}

Now substitute in Theorem \ref{A1} $\sigma = 3+\varepsilon$, $A=L_{\omega}^{q}=L_{q}(\Omega)$, $p=2$, $u=W_{V}$, $\alpha=3+\frac{\varepsilon}{2}$. Then $p_{\alpha}=\left(\frac{1}{2}\left(1-\frac{3+\frac{\varepsilon}{2}}{3+\varepsilon}\right)\right)^{-1} = 4+\frac{12}{\varepsilon}=q$~ and
\begin{equation}\nonumber
\begin{aligned}
\left|D^{3+\frac{\varepsilon}{2}}W_{V}\right|_{L_{x}^{p_{\alpha}}(L_{\omega}^{q})}\leq& C\left|W_{V}\right|^{1-\frac{3+\frac{\varepsilon}{2}}{3+\varepsilon}}_{L_{x}^{p_{\alpha}}(L_{\omega}^{q})}\left|D^{3+\varepsilon}W_{V}\right|_{L_{x}^{\infty}(L_{\omega}^{q})}^{\frac{3+\frac{\varepsilon}{2}}{3+\varepsilon}} = C\left|W_{V}\right|^{\frac{\frac{\varepsilon}{2}}{3+\varepsilon}}_{L_{x}^{p_{\alpha}}(L_{\omega}^{q})}\left|D^{3+\varepsilon}W_{V}\right|_{L_{x}^{\infty}(L_{\omega}^{q})}^{\frac{3+\frac{\varepsilon}{2}}{3+\varepsilon}} = \\
=& C\left|W_{V}\right|^{\frac{2}{q}}_{L_{x}^{q}(L_{\omega}^{q})}\left|D^{3+\varepsilon}W_{V}\right|_{L_{x}^{\infty}(L_{\omega}^{q})}^{1-\frac{2}{q}}.
\end{aligned}
\end{equation}

Since $q=4+\frac{12}{\varepsilon}\geq 4$, then
\begin{equation}\nonumber
\begin{aligned}
\left|D^{3+\frac{\varepsilon}{2}}W_{V}\right|_{L_{\omega}^{4}(L_{t}^{4}(L_{x}^{q}))} \leq & \left|D^{3+\frac{\varepsilon}{2}}W_{V}\right|_{L_{t}^{4}(L_{x}^{q}(L_{\omega}^{q}))} \leq C\left|W_{V}\right|^{\frac{2}{q}}_{L_{x}^{q}(L_{\omega}^{q})}\left|D^{3+\varepsilon}W_{V}\right|_{L_{x}^{\infty}(L_{\omega}^{q})}^{1-\frac{2}{q}} \leq \\ 
\leq & C\left| \Phi \right|_{L_{2}^{0,\widetilde{\sigma}}} \leq C\left| \Phi \right|_{L_{2}^{0,\widetilde{\sigma}+ 2}}.
\end{aligned}
\end{equation}

The proof of Proposition 3.4 in de Bouard and Debussche \cite{Deb} implies that
\begin{equation}\nonumber
\left|W_{V}\right|\leq C(T)\left|\Phi\right|_{L_{2}^{0,\widetilde{\sigma}}} \leq C(T)\left|\Phi\right|_{L_{2}^{0,\widetilde{\sigma}+2}},
\end{equation}
therefore
\begin{equation}\nonumber
\left|W_{V}\right|_{L_{\omega}^{4}\left(L_{t}^{4}\left(W_{x}^{3+\frac{\varepsilon}{2},q}\right)\right)} \leq C\left|\Phi\right|_{L_{2}^{0,\widetilde{\sigma}+2}}
\end{equation}
and, since  $q\,\varepsilon/2>1$, 
\begin{equation}\nonumber
\left|\partial_{3x} W_{V}\right|_{L_{\omega}^{4}(L_{t}^{4}(L_{x}^{\infty}))} \leq C\left|W_{V}\right|_{L_{\omega}^{4}\left(L_{t}^{4}\left(W_{x}^{3+\frac{\varepsilon}{2},q}\right)\right)} \leq C\left|\Phi\right|_{L_{2}^{0,\widetilde{\sigma}+2}}.
\end{equation}
\end{proof}

\end{document}